\documentclass[leqn,10pt]{amsart}
\usepackage[utf8]{inputenc}
\usepackage{a4}
\usepackage[safeinputenc=true,style=alphabetic,firstinits,maxnames=4,useprefix=true,backend=bibtex]{biblatex}

\bibliography{upcrossing}
\usepackage{amsmath,amssymb,amscd,bbm}
\usepackage{amsthm}
\usepackage{enumerate}
\usepackage[mathscr]{eucal}
\usepackage[all]{xy}
%\usepackage[notcite,notref]{showkeys}
%\usepackage{hyperref}

%\usepackage{pgflibrarysnakes}
%\usepackage{pgflibraryarrows}
%\usepackage{tikz}
%\usetikzlibrary{decorations.pathmorphing, patterns,mindmap,trees}

%Layout

%\setlength{\textwidth}{145mm}
%\setlength{\textheight}{215mm}
%\setlength{\hoffset}{-15mm}
%\parskip=6pt
%\parindent=20pt
%\hsize=6.25truein
%\vsize=8.9truein
%\marginparwidth=20mm

% LATEX setup

\newtheorem{thm}{Theorem}[section]

\newtheorem{lemma}[thm]{Lemma}

\theoremstyle{definition}

\newtheorem{exa}[thm]{Example}

\theoremstyle{remark}

\newtheorem{rem}[thm]{Remark}

\numberwithin{equation}{section}

%\makeatletter
%\newtheoremstyle{nonumberplainnobrackets}
%  {\item[\theorem@headerfont\hskip\labelsep ##1\theorem@separator]}
%  {\item[\theorem@headerfont\hskip \labelsep ##1\ ##3\theorem@separator]}
%\makeatother

%\theoremstyle{nonumberplainnobrackets}
%\newtheorem{claimproof}{Proof of claim}
%\theoremstyle{definition}
%\newtheorem*{claimproof}[thm]{Proof}

\newcommand{\vanish}[1]{\relax}       % comments out inclosed text
   % puts ???? in the margin
               % Definitions in Text
\def\qedsymbol{\hbox to 1ex{\llap{\rule{0.25pt}{1ex}}\rlap{\rule{1ex}{0.25pt}}\lower0.25pt\rlap{\raise1ex\rlap{\rule{1ex}{0.25pt}}}\hskip1ex\llap{\rule{0.25pt}{1ex}}}}
\def\rlqed{\rlap{\rule{\hsize}{0pt}\kern-1ex\kern-1em\qed}} %% \qed rechts

% Enumeration environments:
%aufzi  a), b) c) ...  for listings of assertions in theorems, lemmas etc
%aufzii  (i), (ii), (iii) for listings of assertions in equivalences
%aufzii  1), 2), 3) for all other kinds of listings

\newcounter{aufzi}
\newenvironment{aufzi}{\begin{list}{ {\upshape(\alph{aufzi})}}{
        \usecounter{aufzi}
        \topsep1ex
%        \partopsep
        \parsep0cm
        \itemsep0.8ex
        \leftmargin1cm
%        \rightmargin
%        \listparindent
        \labelwidth0.5cm
        \labelsep0.3cm
        %\itemindent-0.3cm
}}
{\end{list}}

\newcounter{aufzii}

\newcounter{aufziii}
\newenvironment{aufziii}{\begin{list}{ {\upshape\arabic{aufziii})}}{
        \usecounter{aufziii}
        \topsep1ex
%        \partopsep
        \parsep0cm
        \itemsep0.8ex
        \leftmargin1cm
%        \rightmargin
%        \listparindent
        \labelwidth0.5cm
        \labelsep0.3cm
        %\itemindent-0.3cm
}}
{\end{list}}

 % Proofs later then just after the statement of the theorem

\makeatletter

\makeatother

%
%   double-striped
%

\newcommand{\bbP}{\mathbb{P}}

%
%   calligraphic letters
%

\newcommand{\calC}{\mathcal{C}}

\newcommand{\calF}{\mathcal{F}}

\newcommand{\calU}{\mathcal{U}}
\newcommand{\calV}{\mathcal{V}}
\newcommand{\calW}{\mathcal{W}}

\newcommand{\calY}{\mathcal{Y}}
\newcommand{\calZ}{\mathcal{Z}}

% script letters uppercase

% fraktur lowercase

% fraktur uppercase

% boldface  uppercase

% boldface  lowercase

% Uppercase mathrm

% lowercase mathrm

\def\ue{\mathrm{e}}

  % Definition of lower case Greek letters  (depending on package)

  % {\allmodesymb{\greeksym}{s}}
      %{\allmodesymb{\greeksym}{p}}
      %{\allmodesymb{\greeksym}{l}}

% Rule: fixed mathematical objects should have non-slanted symbols

    % this is for the \ud{x}  in an integral
\renewcommand{\ue}{\mathrm{e}}    %  for Euler's number
    %  for imaginary unit
   %for the identity map on a set
           % constant function 1
                      % Lebegue measure
                   % Heisenberg group

%
%   Special Sets
%

\newcommand{\R}{\mathbb{R}}     % real numbers etc
\newcommand{\N}{\mathbb{N}}
\newcommand{\Z}{\mathbb{Z}}

     % open unit disc
          % torus

%    Equivalence class and Sets

     % "suchthat"

   % for expressions like [f > 0]

%
%   set theoretic operations
%

   %             Power set
    %      Complement
 %complement in Boole algebra
  % cardinality
      % symmetric difference

\newcommand{\Bigcap}[2][\relax]{%
 \ifx#1\relax \bigcap_{#2}
 \else \bigcap^{#1}_{#2}
 \fi}
\newcommand{\Bigcup}[2][\relax]{%
 \ifx#1\relax \bigcup_{#2}
 \else \bigcup^{#1}_{#2}
 \fi}

%
%   Logical Operations
%
% \rightarrow

% \rightarrow

%
%   Mappings and functions
%

       % topologischer Tr\"ager

% Lattice theory

%
%   Operations of functions and numbers, Analyis
%
 % Einschr\"ankung
   % conjugation
          % Real part
          % Imaginary part
        % sign function
   % modulus, if not too high
           % modulus in ususal text
      % kills the Latex versions of Re and Im

%\def\bbinom#1#2{\tbinom{#1}{#2}} %% small  binomialkoefficients

%
% Topology
%

      % Inneres
     % closure
                  % closure as in cl{ ...}
            % rand
          % Ball(x,r)
       % distance
       % diameter
              % homeoemprhisms

%
%   (Linear) Algebra
%

  % generated

       % convex hull
       % generated cone
 % absolutrly convex hull
         % matrices
                  % algebra generated by

%
% Group theory
%

   % Haar measure
     % dual Group
    % fouble dual group
     % modulo
%%%%%%%%%%%% G/K
\def\fact#1#2{#1/#2}
\def\tfact#1#2{#1/#2}

%%%%%%%%%%%% G/K
\def\fact#1#2{{\raise0.2em\hbox{$#1$}\kern-0.2em/\kern-0.1em\lower0.2em\hbox{$#2$}}}
\def\tfact#1#2{{\raise0.1em\hbox{\small$#1$}\kern-0.1em/\kern-0.1em\lower0.1em\hbox{\small$#2$}}}

%
%   Functional Analysis  and operator theory
%
\newcommand{\norm}[2][\relax]{%                             % norm
   %\ensuremath{\left\Vert{#2}\right\Vert_{#1}}}
   \ifx#1\relax \ensuremath{\left\Vert#2\right\Vert}
   \else \ensuremath{\left\Vert#2\right\Vert_{#1}}
   \fi}

      %  alternative norm
                      % norm in usual text
\newcommand{\Bnorm}[2][\relax]{%        % Big norm, but usually no too big
   %\ensuremath{\left\Vert{#2}\right\Vert_{#1}}}
   \ifx#1\relax \ensuremath{\Bigl\Vert#2\Bigr\Vert}
   \else \ensuremath{\Bigl\Vert#2\Bigr\Vert_{#1}}
   \fi}
         % schwache Konvergenz
           %Identity operator
   %spectrum

     %resolvent set

            % alternativ: \ker

       % trace
      % diag(a_1,\dots,a_d)
       % linear span
        % linear span as well
                % bounded linear operators
           % closed convex hull
           % closed linear span
    % closed algebra generated by
                    % unitary operators
                   % weak operator top
                   % strong operator top
                   % closure
                   % Gelfand spectrum
                    % Gelfand transform

\makeatletter
%% dual product in running text
\newcommand{\tdprod}[2]{\ensuremath{%
  \setbox0=\hbox{\ensuremath{\langle#1,#2 \rangle}}
  \dimen@\ht0
  \advance\dimen@ by \dp0 (#1\rule[-\dp0]{0pt}{\dimen@}\,|#2\hspace{1pt})}}
%% scaling dual product
\newcommand{\dprod}[2]{\ensuremath{%
  \setbox0=\hbox{\ensuremath{\left\langle#1,#2\right\rangle}}
  \dimen@\ht0
  \advance\dimen@ by \dp0 \left\langle\left.#1\rule[-\dp0]{0pt}{\dimen@}\,\right|#2\hspace{1pt}\right\rangle}}

%%big dual product
\newcommand{\bdprod}[2]{\ensuremath{%
  \setbox0=\hbox{\ensuremath{\bigl\langle#1,#2\bigr\rangle}}
  \dimen@\ht0
  \advance\dimen@ by \dp0 \bigl\langle#1\bigl|\rule[-\dp0]{0pt}{\dimen@}\bigr.#2\hspace{1pt}\bigr\rangle}}
%%Big dual product
\newcommand{\Bdprod}[2]{\ensuremath{%
  \setbox0=\hbox{\ensuremath{\Bigl\langle#1,#2\Bigr\rangle}}
  \dimen@\ht0
  \advance\dimen@ by \dp0 \Bigl\langle#1\Bigl|\rule[-\dp0]{0pt}{\dimen@}\Bigr.#2\hspace{1pt}\Bigr\rangle}}
%% scalar product in running text
\newcommand{\tsprod}[2]{\ensuremath{%
  \setbox0=\hbox{\ensuremath{(#1,#2)}}
  \dimen@\ht0
  \advance\dimen@ by \dp0 (#1\rule[-\dp0]{0pt}{\dimen@}\,|#2\hspace{1pt})}}
%% scaling scalar product
\newcommand{\sprod}[2]{\ensuremath{%
  \setbox0=\hbox{\ensuremath{\left(#1,#2\right)}}
  \dimen@\ht0
  \advance\dimen@ by \dp0 \left(\left.#1\rule[-\dp0]{0pt}{\dimen@}\,\right|#2\hspace{1pt}\right)}}
%%big scalar product
\newcommand{\bsprod}[2]{\ensuremath{%
  \setbox0=\hbox{\ensuremath{\bigl(#1,#2\bigr)}}
  \dimen@\ht0
  \advance\dimen@ by \dp0 \bigl(#1\bigl|\rule[-\dp0]{0pt}{\dimen@}\bigr.#2\hspace{1pt}\bigr)}}
%%Big scalar product
\newcommand{\Bsprod}[2]{\ensuremath{%
  \setbox0=\hbox{\ensuremath{\Bigl(#1,#2\Bigr)}}
  \dimen@\ht0
  \advance\dimen@ by \dp0 \Bigl(#1\Bigl|\rule[-\dp0]{0pt}{\dimen@}\Bigr.#2\hspace{1pt}\Bigr)}}
\makeatother

%
%   Function spaces, sequence spaces  and spaces of measures
%

\newcommand{\Cc}{\mathrm{C}_\mathrm{c}}

   % l^p spaces

\newcommand{\Ell}[2][\relax]{%     % Ell-p spaces
   \ifx#1\relax \mathrm{L}^{\mathrm{#2}}
   \else \mathrm{L}^{\mathrm{#2}}_{\mathrm{#1}}
   \fi}
\renewcommand{\Ell}[2][\relax]{%    ??????????????????????????????????????????
   \ifx#1\relax \mathrm{L}^{\!#2}
   \else \mathrm{L}^{\!#2}_{\mathrm{#1}}
   \fi}

\newcommand{\Wee}[2][\relax]{%    %   Sobolev spaces
   \ifx#1\relax \mathrm{W}^{\mathrm{#2}}
   \else \mathrm{W}^{\mathrm{#2}}_{\mathrm{#1}}
   \fi}
\newcommand{\Har}[2][\relax]{%       %Hardy spaces
   \ifx#1\relax \mathsf{H}^{\mathsf{#2}}
   \else   \mathsf{H}^{\mathsf{#2}}_{\mathrm{#1}}
   \fi}
     % finite measures

%
%   Measure and Integration theoy
%
    % Standard Measure Space

   %measureable functions   ?

%\DeclareMathOperator{\Ele}{El}

      % ?

                %  bounded (Baire) measureable functions

% Probability theory

      % Variation
      % Probability
      % Expectation
   % Conditional probability

\def\prX{\mathrm X}    % probability space
           % the underlying set of a probability space
\def\prY{\mathrm Y}

  %the corresponding sigma-algebra

        % the P-measure of the probability space \prX

   % Measure algebra

%
%  CHOQUET Theory
%

  % extremal points

%
%   Ergodic theory and dynamical systems
%

   % dynamical system,

   % Markov embeddigs
     % Markovooperators
   % Kroneckerfaktor
    % limit in density
      % Cesaro average
    % almost weakly stable
    %  reversible
    %  ergodic
     %  density (of a set of numbers)
    % lower density

\def\rlqed{\rlap{\rule{\hsize}{0pt}\kern-1ex\kern-1em\qed}}

%%%%%%%%%%%% Ab hier \eqtext
\makeatletter

\def\maketag@@@@@#1{\llap{\hbox to\hsize{\m@th\normalfont#1}}%
\gdef\tagform@##1{\maketag@@@{(\ignorespaces##1\unskip\@@italiccorr)}}}

\def\eqtext#1{\gdef\tagform@##1{\maketag@@@@@{\ignorespaces##1\unskip\@@italiccorr\hfill}}\tag{#1}}%
\def\reqtext#1{\gdef\tagform@##1{\maketag@@@@@{\hfill\ignorespaces##1\unskip\@@italiccorr}}\tag{#1}}%
\def\leqtext#1{\gdef\tagform@##1{\maketag@@@@@{\ignorespaces##1\unskip\@@italiccorr}}\tag{#1}}%
%\def\@@@lefttag{
%   \kern-\tagshift@
%        \if1\shift@tag\row@\relax
%            \rlap{\vbox{%
%                \normalbaselines
%                \boxz@
%                \vbox to\lineht@{}%
%                \raise@tag
%            }}%
%        \else
%            \hskip-\displaywidth\hskip-0.25em\rlap{\boxz@}%
%        \fi
%        \kern\displaywidth@
%    }
%
%\newif\ifeqtext@
%\def\place@tag{%
%\ifeqtext@
%\@@@lefttag
%\else
%   \iftagsleft@
%     \@@@lefttag
%     \else
%        \kern-\tagshift@
%        \if1\shift@tag\row@\relax
%            \llap{\vtop{%
%                \raise@tag
%                \normalbaselines
%                \setbox\@ne\null
%                \dp\@ne\lineht@
%                \box\@ne
%                \boxz@
%            }}%
%        \else
%            \llap{\boxz@}%
%        \fi
%    \fi
%   \fi
%    \global\eqtext@false
%}

\makeatother

%%%%%%%%%%%% Ende \eqtext

%\def\noqed{\def\qed{}}
%\let\oldproof\proof
%\let\oldendproof\endproof
%\newif\ifinproof
%\def\proof{\oldproof\inprooftrue}
%\def\endproof{\qed\oldendproof\inprooffalse}

%%%%%%%%%%%%%%%%%%%%%%%%%
%Text-specific macro's
%%%%%%%%%%%%%%%%%%%%%%%%%

\newcommand{\UT}[2]{\mathsf{UT}_{#1}(#2)}

\newcommand{\cntm}[1]{\left|#1\right|}
\newcommand{\Rp}{\R_{\geq 0}}
\newcommand{\Rps}{\R_{>0}}
\newcommand{\Zp}{\Z_{\geq 0}}
\newcommand{\bl}{\mathrm B}
\newcommand{\bdr}[1]{\partial_{#1}}
\newcommand{\intr}[1]{\mathrm{int}_{#1}}
\newcommand{\rad}{\mathrm{rad}}
\newcommand{\hght}{\mathrm{ht}}
\newcommand{\core}{\mathrm{core}}

\newcommand{\flc}[3]{\mathcal{F}_{(#1,#2)}^{#3}}

\newcommand{\kint}[2]{\mathrm{int}_{#1}(#2)}

\newcommand{\indi}[1]{\mathbf{1}_{#1}}

\date{\today}

\begin{document}

\title[Upcrossing theorems]{Hochman's upcrossing theorem\\
for groups of polynomial growth.}

\author[N. Moriakov]{Nikita Moriakov}
\address{Delft Institute of Applied Mathematics, Delft University of Technology,
P.O. Box 5031, 2600 GA Delft, The Netherlands}

\email{n.moriakov@tudelft.nl}

\subjclass{Primary 28D15, 60G10, 60G17}
\renewcommand{\subjclassname}{\textup{2000} Mathematics Subject
    Classification}

\date{\today}

\begin{abstract}
Consider a stochastic process $(S_{[a_i,b_i]})_{[a_i,b_i] \subset \mathbb{N}}$, which is indexed
by the collection of all nonempty intervals $[a_i,b_i] \subset \mathbb{N}$ and which is
stationary under translations of the intervals. It was shown by M. Hochman that,
for any $k \geq 1$ and any interval $(\alpha,\beta) \subset
\mathbb{R}$, one can give an `almost-exponential' bound on the
size of the set where the associated process $(S_{[1,n]})_{n \geq 1}$ has at
least $k$ fluctuations over $(\alpha,\beta)$. It was also noticed that
a similar techniques can be applied in $\mathbb{Z}^d$ case. In this article we extend
Hochman's upcrossing theorem to groups of polynomial growth. 
\end{abstract}

\maketitle

\section{Introduction}
Given an integer $n \in \Zp$ and some numbers $\alpha,\beta \in \R$
such that $\alpha<\beta$, a sequence of real numbers $(a_i)_{i = 1}^k$
is said to have \textbf{at
least $n$ upcrossings} across the interval $(\alpha,\beta)$ if there
are indexes $1 \leq i_1 < j_1 < i_2 < j_2 < \dots < i_n < j_n
\leq k$ such
that
\begin{aufziii}
\item $a_{i_l} < \alpha$ for all indexes $l$;
\item $a_{j_l} > \beta$ for all indexes $l$.
\end{aufziii} 
If $(a_i)_{i \geq 1}$ is an infinite sequence of real numbers, we use the
same terminology and say
that $(a_i)_{i \geq 1}$ has at least $n$ upcrossings across the
interval $(\alpha,\beta)$ if some initial segment $(a_i)_{i=1}^k$ of
the sequence has at least $n$ upcrossings across $(\alpha,\beta)$. To
simplify the notation we
denote the sets of real-valued sequences having at least $n$
upcrossings across an interval $(\alpha, \beta)$ by
$\calF_{(\alpha,\beta)}^n$.

The purpose of this article is to generalize the following result of
M. Hochman from \cite{hochman2009}. Consider a real-valued stochastic process
$(S_{[i,j]})_{1\leq i \leq j}$, indexed by the collection of all
integer intervals $[i,j] \subset \N$. Suppose that this process is
stationary, i.e.,
\[
(S_{[i,j]})_{1 \leq i \leq j} = (S_{[i+k,j+k]})_{1\leq i \leq j} \quad
\text{ for all } k \in \N
\]
in distribution. For a measure space $(\prY,\calC,\nu)$, a measurable subset $I
\subseteq Y$ and a number $\delta>0$
we say that a collection of measurable sets $I_1,\dots,I_k$
\textbf{$\delta$-fills} $I$ if $I_i \subseteq I$ for all indices $i$ and
$\nu\left(I \setminus \bigcup\limits_{i} I_i \right) < \delta \nu(I)$. When
working with subsets of $\N$, or subsets of groups of polynomial
growth, we always use the counting measure. The
result of Hochman states the following. Let $(\alpha,\beta) \subset
\R$ be some interval and
$0< \delta<\frac 1 4$ be some constant. Then there exist constants $c>0$ and $\rho \in
(0,1)$, depending only on $\alpha,\beta,\delta$, such that for every stationary process $(S_{[i,j]})_{1\leq i
  \leq j}$ and all $k \geq 1$
\begin{align*}
&\bbP(\{ x: (S_{[1,i]}(x))_{i \geq 1} \in \flc{\alpha}{\beta}{k}  \})
  \leq \\
 &\leq c \rho^k + \bbP \left(\left\{ x: 
\begin{array}{c}
\exists n>k \text{ s.t. }
  S_{[1,n]}(x)>\beta \text{ and } \bl(n) \text{ can be } \delta-\text{filled}\\
\text{by disjoint intervals } V_1,\dots,V_m \text{ s.t. } \forall i \
  S_{V_i}(x)<\alpha
\end{array} \right\} \right).
\end{align*}

We generalize this result as follows. Suppose that $\Gamma$ is an
discrete group of polynomial growth endowed with the word
metric $d$, see Section \ref{ss.grouppolgr} for the definitions. We consider stationary
processes $(S_{\bl(g,r)})$, indexed by the collection of
all balls 
\[
\{ \bl(g,r): g \in \Gamma, r \in \Zp \},
\]
which we view plainly as subsets of $\Gamma$ for the moment. We denote
by $\bl(n)$ the ball $\bl(\ue,n)$ of radius $n$ around the neutral
element $\ue \in \Gamma$. Stationarity of the process means that
\[
(S_{\bl(g,r)})_{g,r} = (S_{\bl(g+h,r)})_{g,r} \quad \text{ for all } h \in \Gamma
\]
in distribution. The main result of this article is the following theorem.
\begin{thm}
\label{t.hut}
Let $\Gamma$ be a group of polynomial growth. For all intervals
$(\alpha,\beta) \subset \R$ and all $\delta>0$ small enough there
exist constants $c>0$ and $\rho \in (0,1)$ such that the following
holds. If $(S_{\bl(g,r)})$ is a stationary process, then for
all $k \geq 1$
\begin{align*}
&\bbP(\{ x: (S_{\bl(i)}(x))_{i \geq 1} \in \flc{\alpha}{\beta}{k}  \})
  \leq \\
 &\leq c \rho^k + \bbP \left(\left\{ x: 
\begin{array}{c}
\exists n>k \text{ s.t. }
  S_{\bl(n)}(x)>\beta \text{ and } \bl(n) \text{ can be } \delta-\text{filled}\\
\text{by disjoint balls } V_1,\dots,V_m \text{ s.t. } \forall i \
  S_{V_i}(x)<\alpha
\end{array} \right\} \right)
\end{align*}
\end{thm}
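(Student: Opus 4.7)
The plan is to mimic Hochman's inductive argument from \cite{hochman2009}, replacing the nested intervals $[1,n]\subset\N$ by concentric balls $\bl(n)\subset\Gamma$ and replacing the elementary ``cut an interval into sub-intervals'' step by a Vitali-type covering of $\bl(n)$ by sub-balls. Hochman's proof rests on three ingredients: (i) a family of nested reference sets, (ii) a way of $\delta$-tiling each reference set by smaller ones, and (iii) an iteration that converts each additional required upcrossing into a multiplicative factor $\rho<1$ in the probability bound. Ingredient (i) is immediate in $\Gamma$; (ii) needs to be proved; once it is, (iii) can be carried over provided the linear-order step in Hochman is replaced by a translation-averaging argument.

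First I would establish the covering lemma serving as (ii). Because $\Gamma$ has polynomial growth, its balls satisfy the doubling inequality $|\bl(2n)|\leq D|\bl(n)|$ for some constant $D=D(\Gamma)$, and hence admit a Vitali-type selection lemma. Combined with the Følner property (itself a consequence of polynomial growth), doubling yields the following: for every $\delta>0$ there exist $R_0$ and $\kappa$, depending only on $\delta$ and $\Gamma$, such that every ball $\bl(n)$ with $n$ sufficiently large can be $\delta$-filled by pairwise disjoint sub-balls of radii in $[R_0,\kappa R_0]$, with centres that may even be prescribed in a net inside $\bl(n)$.

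With the covering lemma in hand I would induct on $k$. Let $A_k$ denote the event $\{(S_{\bl(i)})_{i\geq 1}\in\flc{\alpha}{\beta}{k}\}$ and let $B$ denote the error event on the right-hand side of the theorem. On $A_{k+1}\setminus B$ I would extract a scale $n$ with $S_{\bl(n)}(x)>\beta$ coming from the $(k+1)$-st upcrossing. By definition of $B$, \emph{every} $\delta$-filling of $\bl(n)$ by disjoint sub-balls contains at least one sub-ball $V$ with $S_V(x)\geq \alpha$. Applying the covering lemma and averaging over translates of a fixed reference ball $V_0\subset\bl(n)$ using stationarity and the counting measure on $\bl(n)$, this translates into a lower bound $\bbP(S_{V_0}\geq \alpha \mid A_k\setminus B)\geq p$ for some $p>0$ depending only on $\delta$ and $\Gamma$. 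Combined with the observation that the $k$ previous upcrossings must then live inside a proper sub-ball $V'\subsetneq \bl(n)$, this recursion yields $\bbP(A_{k+1}\setminus B)\leq \rho\cdot\bbP(A_k\setminus B)$ for some $\rho=\rho(\alpha,\beta,\delta,\Gamma)<1$, and therefore $\bbP(A_k\setminus B)\leq c\rho^k$, which is the statement.

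The hardest step, as I see it, is the last one: converting the deterministic existence of a bad sub-ball $V$ inside $\bl(n)$ into the quantitative probabilistic gain $\rho<1$. On $\N$ Hochman uses the linear order on $[1,n]$ to pick a random cut-off point, which creates a new upcrossing whose distribution can be controlled by stationarity. In the group case the linear order is absent, and the substitute must be a randomization over translates of a small reference ball by elements of $\bl(n)$, coupled with the Vitali covering. Making this averaging quantitative and independent of $n$ and of the process $(S_{\bl(g,r)})$---so that $\rho$ depends only on the doubling constant $D$, the level $\delta$ and the interval $(\alpha,\beta)$---is the main technical work; once accomplished, the remainder of the proof is essentially bookkeeping.
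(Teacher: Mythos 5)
Your proposal captures some of the right ingredients (doubling and Vitali covering from polynomial growth, a translation-averaging step driven by stationarity), but the inductive structure you propose---a conditional-probability recursion $\bbP(A_{k+1}\setminus B)\leq \rho\,\bbP(A_k\setminus B)$ obtained by conditioning on $A_k\setminus B$---is not how the paper's argument (or Hochman's original one) runs, and the gap you yourself flag as ``the main technical work'' is in fact the core of the proof and is not closed by what you write.

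The paper does not condition and does not induct on $k$ probabilistically. Instead it uses a \emph{transference principle} (Lemma~\ref{l.caldtrans}): by stationarity, $\bbP(Q^k_{\ue,l}\setminus R^k_{\ue})$ equals the expected density, in a large ball $\bl(M)$, of the set $E^k_{M,l,x}$ of group elements $g$ for which $x\in Q^k_{g,l}\setminus R^k_g$. So the whole problem is reduced to a \emph{deterministic}, per-$x$ bound on $|E^k_{M,l,x}|/|\bl(M)|$ that is exponentially small in $k$. To prove that, the paper observes that each $g\in E^k_{M,l,x}$ carries a nested chain of centered balls $U_1(g)\subsetneq V_1(g)\subsetneq\dots\subsetneq U_k(g)\subsetneq V_k(g)$, that the failure of $x$ to lie in $R^k_g$ forces $|V_i(g)|/|U_i(g)|>1+\delta$ (otherwise $U_i(g)$ alone would $\delta$-fill $V_i(g)$), and that via Lemma~\ref{l.measvsrad} this translates into a uniform radius-ratio gap. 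Thinning the chain by a factor $D$ then produces two centered towers $\widetilde{\calU},\widetilde{\calV}$ satisfying the expansion condition~\eqref{eq.vugrowth}, to which the ``tower sandwich'' estimate (Theorem~\ref{t.expgr}) applies and yields the exponential bound $|E^k_{M,l,x}|\leq (1+\delta/(4\cdot 3^{q+1}))^{-\lfloor L/K\rfloor}|\bl(M+l)|$. Theorem~\ref{t.expgr} in turn rests on Lemma~\ref{l.fill} and the Effective Vitali Covering theorem~\ref{t.evc}.

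Concretely, the issues with your sketch are: (1) the random scale $n$ at which $S_{\bl(n)}(x)>\beta$ depends on $x$, so averaging over translates of ``a fixed reference ball $V_0\subset\bl(n)$'' is not well-defined uniformly in $x$; (2) the claimed bound $\bbP(S_{V_0}\geq\alpha\mid A_k\setminus B)\geq p$ does not plug into a recursion $\bbP(A_{k+1}\setminus B)\leq\rho\,\bbP(A_k\setminus B)$, because having one sub-ball with $S_V\geq\alpha$ does not by itself remove probability mass from $A_{k+1}\setminus B$; and (3) the observation that ``the $k$ previous upcrossings must then live inside a proper sub-ball $V'\subsetneq\bl(n)$'' does not produce an instance of $A_k$ on that sub-ball, since the process of interest is $(S_{\bl(i)})_{i\geq 1}$ indexed by balls centered at the identity, not at an arbitrary center. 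The crucial missing idea is the tower/expansion formalism: using non-$\delta$-fillability to force geometric growth of the nested balls, and then iterating a Vitali-type covering along the levels of the tower to accumulate the factor $\rho^k$. Without that, the ``bookkeeping'' you defer is exactly where the proof lives.
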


The paper is structured as follows. In Section \ref{ss.measpolgr} we
introduce the notion of a metric measure space of exact polynomial
volume growth, provide some examples and prove some elementary properties of
these spaces. Section \ref{ss.grouppolgr} is devoted to discrete
groups of polynomial growth, which become metric measure spaces of
uniformly exact polynomial volume growth when endowed with the
counting measure and the word metric. In Section \ref{s.covlem} we
derive some covering lemmas on metric measure spaces of exact
polynomial volume growth, including the `Effective Vitali Covering'
(Theorem \ref{t.evc}) and the generalization of Hochman's `tower
sandwich' lemma (Theorem \ref{t.expgr}), which are the main technical
tools in this article.

The upcrossing theorem for groups of polynomial growth is proved in
Section \ref{ss.hochman}. As one of the applications, we show in Section \ref{ss.kingman} how Hochman's proof of exponential
upcrossing inequality for Kingman's subadditive ergodic theorem
can be obtained for discrete groups of polynomial growth using Theorem \ref{t.hut}.

\section{Preliminaries}
\subsection{Metric Measure Spaces of Exact Polynomial Volume Growth}
\label{ss.measpolgr}
Let $\prX=(X,d)$ be a metric space. A closed ball with radius $r \in \Rp$
and center $x \in X$ will be denoted by $\bl(x,r)$. Let $\mu$ be a Borel measure on
$X$. A pair $(\prX,\mu)$ will be called a \textbf{metric measure
  space}. We say that the measure $\mu$ on $X$ is \textbf{strictly
  positive} if for all points $x \in X$ and all radii $r \in \Rps$
\begin{equation*}
0<\mu(\bl(x,r))<\infty.
\end{equation*}
\noindent A metric measure space $(\prX,\mu)$ with a strictly positive measure $\mu$ will
be called a \textbf{space of exact polynomial volume growth} if there are
constants $c_X \in \Rps$ and $q \in \Rp$ such that
\begin{equation}
\label{eq.expolgr}
\lim\limits_{t \to \infty} \frac{\mu(\bl(x,t))}{c_X t^q} = 1 \quad \text{
  for all } x \in X.
\end{equation}
The fact that $\mu$ is strictly positive implies that the
constants $c_X$ and $q$ are uniquely determined. We call $q$ the
\textbf{degree of polynomial volume growth}. If the limit in Equation
\eqref{eq.expolgr} converges uniformly on a subset $W \subseteq X$, we say that
$(\prX,\mu)$ has \textbf{uniformly exact polynomial volume growth on $W$}. If
the limit converges uniformly on the whole space $X$, the space
$(\prX,\mu)$ will be called a \textbf{space of uniformly exact
  polynomial volume growth}. 

Many interesting examples of metric measure spaces with (uniformly)
exact polynomial volume growth are given by groups endowed with a translation-invariant
metric and a Haar measure. We begin with some basic examples.

\begin{exa}
\label{ex.rd}
Let  $\prX:=(\R^k,d)$ be the $k$-dimensional Eucledean space, which we
endow with the $\ell^2$-norm $\| \cdot \|_2$
and the associated translation-invariant metric $d$ given by
\[
d(x,y):=\| x - y \|_{2}.
\]
It is easy to see that the Lebesgue measure $\mu$ on $\R^k$ is doubling,
and, furthermore, that $(\prX,\mu)$ is a metric measure space of
uniformly exact polynomial volume growth of degree $k$.
\end{exa}

\begin{exa}
\label{ex.heisenberg}
Consider the real Heisenberg group $\UT{3}{\R}$. By definition,
\begin{equation*}
\UT{3}{\R} := \left\{ \left( \begin{matrix}
        1 & a & c \\
        0 & 1 & b \\
        0 & 0 & 1
      \end{matrix} \right): a,b,c \in \R  \right\}.
\end{equation*}
To simplify the notation, we will denote a matrix
\begin{equation*}
\left( \begin{matrix}
        1 & a & c \\
        0 & 1 & b \\
        0 & 0 & 1
      \end{matrix} \right) \in \UT{3}{\R}
\end{equation*}
by the corresponding triple $(a,b,c)$ of its
entries. The Lebesgue measure on $\R^3$ becomes the Haar measure $\mu$ on $\UT{3}{\R}$
under this identification, i.e., for all compactly supported
continuous functions $f \in \Cc(\UT{3}{\R})$ we
have
\[
\int\limits_{\UT{3}{\R}} f d \mu = \int\limits_{\R}
\int\limits_{\R} \int\limits_{\R} f(a,b,c) da db dc.
\]
The \emph{Kor\'{a}nyi norm} of an element $(a,b,c) \in \UT{3}{\R}$ is defined by
\[
\| (a,b,c) \|_K := ((a^2 + b^2)^2 + c^2 )^{1/4},
\]
and the associated left-invariant metric is given by
\[
d_K(x,y):=\| x^{-1} y\|_K \quad (x,y \in \UT{3}{\R}).
\]
A direct computation shows that, with respect to this metric, $((\UT{3}{\R},d_K),\mu)$ is a metric measure space of uniformly exact
 polynomial volume growth of degree $4$.
\end{exa}

\begin{rem}
\label{r.liegr}
Furthermore, one can generalize these examples and show that connected nilpotent homogeneous groups have
uniformly exact polynomial volume growth as well. For more details we refer
to \cite{nevo2006}. For the definition of a homogeneous group we refer
to \cite{folste1982}. We will not discuss these results in the article, since our
main object of interest in this paper are (discrete) \emph{groups of polynomial growth}, which become metric measure spaces with
uniformly exact polynomial volume growth when endowed with a counting measure
and a \emph{word metric}. We introduce groups of polynomial
growth in the next section (Section \ref{ss.grouppolgr}).  
\end{rem}

\begin{rem}
\label{r.nondoubling}
A metric measure space $(\prX,\mu)$ with a strictly positive measure
$\mu$ is called \textbf{doubling} if
there exists a constant $C>0$ such that for all $x \in X, r \in \Rp$
we have
\[
\mu(\bl(x, 2r)) \leq C \mu(\bl(x,r)).
\]
The concept of a doubling metric measure space is standard in
analysis. One may wonder if a metric measure space of uniformly exact
polynomial growth is necessarily doubling - and the answer in general is `no'. The reason is
that the definition of uniformly exact polynomial growth only controls
the size of the balls `at large scale', while the doubling condition
at small scales can still be violated.
\end{rem}

A natural question is if all metric measure spaces of exact polynomial
volume growth have uniformly exact polynomial volume growth. The
answer is `no': it is easy to construct a metric $d'$ on $\R$ such
that
\begin{aufzi}
\item $d'$ is equivalent to $d$ in the sense of topology (but
  \emph{not} bi-Lipschitz equivalent);
\item $((\R,d'),\mu)$ is a metric measure space of exact, but \emph{not}
  uniformly exact, polynomial
  growth of degree $1$.
\end{aufzi}
However, the following lemma shows that the volume growth is always uniformly exact
on bounded subsets of metric measure spaces of exact polynomial growth.
\begin{lemma}
\label{l.exgronballs}
Let $(\prX, \mu)$ be a metric measure space of exact polynomial
growth. Let $W=\bl(x,r) \subseteq X$ be a ball around $x \in X$. Then
$(\prX,\mu)$ has uniformly exact polynomial growth on $W$.
\end{lemma}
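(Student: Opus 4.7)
The plan is to reduce uniform convergence on $W = \bl(x_0,r)$ to the pointwise convergence at the single center $x_0$, using the triangle inequality to sandwich arbitrary balls based in $W$ between two balls based at $x_0$.

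Concretely, fix $x_0 \in X$ and $r \in \Rp$, and let $y \in W$, so $d(y,x_0) \leq r$. For every $t \geq r$ the triangle inequality yields
\[
\bl(x_0,t-r) \subseteq \bl(y,t) \subseteq \bl(x_0,t+r),
\]
and hence, by monotonicity of $\mu$,
\[
\frac{\mu(\bl(x_0,t-r))}{c_X t^q} \;\leq\; \frac{\mu(\bl(y,t))}{c_X t^q} \;\leq\; \frac{\mu(\bl(x_0,t+r))}{c_X t^q}.
\]
The crucial point is that the outer two expressions depend only on $x_0$, $r$ and $t$, not on the particular $y \in W$; so any pointwise control at $x_0$ automatically yields uniform control on $W$.

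Next I would rewrite these outer bounds so that the exact polynomial growth hypothesis at $x_0$ becomes directly applicable. For instance,
\[
\frac{\mu(\bl(x_0,t\pm r))}{c_X t^q} \;=\; \frac{\mu(\bl(x_0,t\pm r))}{c_X (t\pm r)^q} \cdot \left(\frac{t\pm r}{t}\right)^q.
\]
By hypothesis \eqref{eq.expolgr} at $x_0$, the first factor tends to $1$ as $t \to \infty$, and since $r$ and $q$ are fixed constants, the second factor $\bigl((t\pm r)/t\bigr)^q$ also tends to $1$. Therefore both the upper and lower bounds converge to $1$ as $t \to \infty$, and by the sandwich argument
\[
\lim_{t \to \infty} \frac{\mu(\bl(y,t))}{c_X t^q} = 1 \quad \text{uniformly in } y \in W.
\]

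There is really no obstacle here: everything reduces to the observation that the sandwiching bounds are independent of $y$. The only minor bookkeeping is that the inclusions above require $t \geq r$, which is harmless since we only need the limit as $t \to \infty$; one simply chooses $T \geq r$ large enough so that the two outer quotients lie in $(1-\varepsilon, 1+\varepsilon)$ for all $t \geq T$, and this $T$ depends only on $x_0$, $r$ and $\varepsilon$, not on $y$.
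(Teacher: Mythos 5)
Your argument is correct and is essentially the same as the paper's: both rely on the sandwich inclusions $\bl(x_0,t-r)\subseteq \bl(y,t)\subseteq \bl(x_0,t+r)$ together with the pointwise limit at the center $x_0$, with the key point being that the outer bounds are independent of $y\in W$. You have simply written out the details that the paper leaves to the reader.
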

\begin{proof}
Fix a point $y \in W$. Then the proof easily follows from the
definition of exact polynomial growth combined with the observations that
\[
\bl(x,s-r) \subseteq \bl(y,s) \quad \text{ for } s>r
\]
and
\[
\bl(y,s) \subseteq \bl(x,r+s) \quad \text{ for } s>r.
\]
\end{proof}

If $t \in \Rp$ is
some number and $W=\bl(y,s) \subseteq X$ is some ball, we will denote by $t \cdot
W$ the ball $\bl(y,ts)$. For a ball $W = \bl(y,s)$ in a metric measure space $(\prX,\mu)$ and
a number $\delta \in (0,1)$ we
define the \textbf{$\delta$-interior} of $W$ as the ball
\[
\intr{\delta}(W):=\bl(y,(1-\delta)s),
\]
and the \textbf{$\delta$-boundary} of $W$ as the set
\[
\bdr{\delta}(W):=W \setminus \intr{\delta}(W).
\]
For $\delta \in \Rp$ the \textbf{$\delta$-expansion} of $W$ is the ball
\[
W^{\delta}:= \bl(y,(1+\delta) s).
\]
In principle, these objects do depend on a concrete representation
of $W$ as a ball $\bl(y,s)$, and not just on $W$ \emph{as a subset} of
$X$. Thus, whenever we are talking about balls or collections of
balls, we always assume that concrete centers and radii are provided,
and this will always be the case in applications. Given a collection $\calC$ of balls in $(\prX,\mu)$ and
a number $\delta \in (0,1)$, we define the $\delta$-interior of
$\calC$ as
\[
\intr{\delta}(\calC):=\bigcup\limits_{W \in \calC} \intr{\delta}(W),
\]
and the $\delta$-boundary of $\calC$ as
\[
\bdr{\delta}(\calC):=\bigcup\limits_{W \in \calC} \bdr{\delta}(W).
\]
For $\delta \in \Rp$ the $\delta$-expansion of $\calC$ is
\[
\calC^{\delta}:=\bigcup\limits_{W \in \calC} W^{\delta}.
\]
Next, the \textbf{radius} of $\calC$ is defined as the infinum of radii of balls in $\calC$, i.e.,
\[
\rad(\calC):=\inf\{ r: \bl(x,r) \in \calC \},
\]
and the \textbf{core} of $\calC$ is defined as the set of centers
of balls in $\calC$, i.e.,
\[
\core(\calC):=\{ x: \bl(x,r) \in \calC \}.
\]
We will now prove a basic lemma, saying that in metric measure spaces
of exact polynomial volume growth we can control the size of the
$\delta$-boundary of collections of balls with sufficiently large radius.
\begin{lemma}[`Controlling the size of $\delta$-boundary']
\label{l.smallbdr}
Let $(\prX,\mu)$ be a metric measure space of exact polynomial growth
of degree $q>0$ and $X_0 \subseteq X$ be a bounded set. There exists a number
$\varepsilon_0 \in (0,1)$ such that for every $\varepsilon \in (0,\varepsilon_0)$ there
exists a number $s = s(\varepsilon,X_0)$ so that the
following assertion holds. If $\calC$ is a countable collection of
disjoint balls in $X$ such that
\[
\rad(\calC)  \geq s
\]
and
\[
\core(\calC) \subseteq X_0,
\]
then for all $0< \delta \leq \min(\frac{\varepsilon}{4 q}, \frac 1 2 )$ we have
\begin{equation}
\label{eq.smallbdr1}
\mu(\intr{\delta}(\calC)) \geq (1-\varepsilon)\mu \left( \bigsqcup\limits_{U \in
\calC} U \right)
\end{equation}
and
\begin{equation}
\label{eq.smallbdr2}
\mu(\bdr{\delta}(\calC)) \leq \varepsilon \mu \left( \bigsqcup\limits_{U \in
\calC} U \right)
\end{equation}
\end{lemma}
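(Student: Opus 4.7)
The plan is to reduce the collective statement to a pointwise one on a single ball and then assemble via disjointness. Since the balls in $\calC$ are disjoint and for each $U \in \calC$ we have the disjoint decomposition $U = \intr{\delta}(U) \sqcup \bdr{\delta}(U)$, the sets $\intr{\delta}(\calC)$ and $\bdr{\delta}(\calC)$ are themselves disjoint and their union equals $\bigsqcup_{U \in \calC} U$. In particular, \eqref{eq.smallbdr2} will be an immediate consequence of \eqref{eq.smallbdr1}, so it suffices to establish the latter. Moreover, by the disjointness of the balls $U = \bl(y,r) \in \calC$ and their $\delta$-interiors, \eqref{eq.smallbdr1} reduces to the pointwise estimate
\[
\mu(\intr{\delta}(U)) \geq (1-\varepsilon)\mu(U) \quad \text{for every } U = \bl(y,r) \in \calC.
\]

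To obtain this pointwise bound, the key input is that $X_0$ is bounded, hence contained in some ball $\bl(x_0,R) \subseteq X$; by Lemma \ref{l.exgronballs} the space $(\prX,\mu)$ has uniformly exact polynomial growth on this ball. Thus for any $\eta > 0$ there is a threshold $t_0 = t_0(\eta,X_0)$ such that
\[
(1-\eta)c_X t^q \leq \mu(\bl(y,t)) \leq (1+\eta)c_X t^q \quad \text{for all } y \in X_0,\ t \geq t_0.
\]
Applying this to $t = r$ and $t = (1-\delta)r$ (which, since $\delta \leq 1/2$, is at least $r/2$), and using $s \geq 2t_0$, gives
\[
\frac{\mu(\bl(y,(1-\delta)r))}{\mu(\bl(y,r))} \geq (1-\delta)^q \cdot \frac{1-\eta}{1+\eta}.
\]

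It remains to estimate $(1-\delta)^q$ from below in terms of $\varepsilon$. Using $\ln(1-\delta) \geq -\delta - \delta^2$ valid on $[0,1/2]$, one gets $(1-\delta)^q \geq 1 - q\delta(1+\delta) \geq 1 - \tfrac{3}{2}q\delta$, and the hypothesis $\delta \leq \varepsilon/(4q)$ yields $(1-\delta)^q \geq 1 - \tfrac{3}{8}\varepsilon$. Combining with the previous display,
\[
\frac{\mu(\bl(y,(1-\delta)r))}{\mu(\bl(y,r))} \geq \Bigl(1-\tfrac{3}{8}\varepsilon\Bigr)\frac{1-\eta}{1+\eta},
\]
and choosing $\eta = \eta(\varepsilon)$ small enough (e.g.\ $\eta \leq \varepsilon/16$) makes the right-hand side at least $1-\varepsilon$. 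Setting $s = s(\varepsilon,X_0) := 2t_0(\eta(\varepsilon),X_0)$ and $\varepsilon_0 \in (0,1)$ sufficiently small so that the elementary manipulations above are valid completes the argument.

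The only mildly delicate step is the estimate $(1-\delta)^q \geq 1 - \tfrac{3}{8}\varepsilon$ for general $q > 0$: Bernoulli's inequality only gives it cleanly for $q \geq 1$, so for the sub-linear case $0 < q < 1$ one must route through the logarithm bound indicated above. Everything else is bookkeeping.
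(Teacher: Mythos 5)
Your proof is correct and follows essentially the same route as the paper's: reduce to a pointwise estimate $\mu(\intr{\delta}(U)) \geq (1-\varepsilon)\mu(U)$ via disjointness, invoke Lemma~\ref{l.exgronballs} to get the two-sided estimate $1-\eta < \mu(\bl(y,t))/(c_X t^q) < 1+\eta$ uniformly over $y \in X_0$ and large $t$, and then compare $(1-\delta)^q$ with $1-\varepsilon$. The only cosmetic difference is in the bookkeeping: the paper picks $\varepsilon_0$ so that $(1-y/(2q))^q > 1-y$ for small $y$ and then a parameter $z$ to absorb the uniformity error, whereas you bound $(1-\delta)^q$ directly via the logarithm — both are sound and give the same conclusion.
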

\begin{proof}
Let $\varepsilon_0>0$ be small enough so that for all
$0< y \leq \varepsilon_0$ we have
\[
\left( 1-\frac{y}{2q}\right)^q > 1-y.
\]
Let $z=z(\varepsilon) \in (0,1)$ be small enough so that for a given
$0<\varepsilon \leq \varepsilon_0$ we have
\[
\frac{1-z}{1+z}(1-\varepsilon/2)>(1-\varepsilon).
\]
Lemma
\ref{l.exgronballs} tells us that there exists large enough $s=s(\varepsilon,X_0)$ so that 
\[
1-z < \frac{\mu(\bl(x,t))}{c_X t^q} < 1+z \quad \text{ for
  all } x\in X_0, t \geq s/2,
\]
where the constant $c_X \in \Rps$ is given by the definition of
metric measure space of exact polynomial volume growth. Then, for all
$\delta \in (0, \min(\frac{\varepsilon}{4 q}, \frac 1 2 ) ]$, $x \in X_0$ and $t>s$ we have
\[
\frac{1-z}{1+z}(1-\delta)^q < \frac{\mu(\bl(x,(1-\delta)t))}{\mu(\bl(x,t))}.
\]
Since
\[
\frac{1-z}{1+z}(1-\delta)^q>1-\varepsilon
\] 
for all $0< \delta \leq \min(\frac{\varepsilon}{4 q}, \frac 1 2 )$, this completes the proof.
\end{proof}

As for $\delta$-expansions, we state the following lemma.
\begin{lemma}[`Controlling the size of $\delta$-expansion']
\label{l.smallexp}
Let $(\prX,\mu)$ be a metric measure space of exact polynomial growth
of degree $q>0$ and let $X_0 \subseteq X$ be a bounded set. There
exists a number $\varepsilon_0 \in (0,1)$ such that for every
$\varepsilon \in (0,\varepsilon_0)$ there
exist a number $s = s(\varepsilon,X_0)$ such that the following assertion holds. If $\calC$ is a countable collection of
disjoint balls in $X$ such that
\[
\rad(\calC)  \geq s
\]
and
\[
\core(\calC) \subseteq X_0,
\]
then for all $\delta \in (0, \frac{\varepsilon}{4 q} )$ we have
\begin{equation}
\label{eq.smallexp1}
\mu \left( \bigsqcup\limits_{U \in
\calC} U \right) \geq (1-\varepsilon) \mu \left( \bigcup\limits_{U \in
\calC} U^{\delta} \right).
\end{equation}
\end{lemma}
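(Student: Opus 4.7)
The argument parallels that of Lemma \ref{l.smallbdr}, with one new wrinkle: the balls $U \in \calC$ are pairwise disjoint, whereas their $\delta$-expansions $U^\delta$ need not be. The plan is to establish the per-ball comparison $\mu(U) \geq (1-\varepsilon)\mu(U^\delta)$ for each $U \in \calC$ and then deduce \eqref{eq.smallexp1} via
\[
\mu\left(\bigsqcup_{U \in \calC} U\right) = \sum_{U \in \calC} \mu(U) \geq (1-\varepsilon) \sum_{U \in \calC} \mu(U^\delta) \geq (1-\varepsilon) \mu\left(\bigcup_{U \in \calC} U^\delta\right),
\]
where the first equality uses disjointness of the $U$'s and the final inequality is countable subadditivity applied to the possibly overlapping expansions.

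To obtain the per-ball comparison, I would first fix $\varepsilon_0 \in (0,1)$ small enough that $(1+\varepsilon/(4q))^q < 1 + \varepsilon/2$ for every $\varepsilon \in (0,\varepsilon_0)$, which follows from Taylor-expanding $(1+t)^q$ at $t = 0$. Given such an $\varepsilon$, I would pick $z = z(\varepsilon) \in (0,1)$ so small that $\frac{1+z}{1-z}(1+\varepsilon/2)(1-\varepsilon) < 1$, which is possible since $(1+\varepsilon/2)(1-\varepsilon) < 1$ already. Lemma \ref{l.exgronballs} then supplies $s = s(\varepsilon, X_0)$ with
\[
1 - z < \frac{\mu(\bl(x,t))}{c_X t^q} < 1 + z \quad \text{for all } x \in X_0 \text{ and } t \geq s.
\]

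For any $U = \bl(y,r) \in \calC$ we have $y \in \core(\calC) \subseteq X_0$ and $r \geq \rad(\calC) \geq s$, so for $\delta \in (0, \varepsilon/(4q))$ both radii $r$ and $(1+\delta)r$ lie in the range where the above sandwich applies, giving
\[
\frac{\mu(U^\delta)}{\mu(U)} = \frac{\mu(\bl(y,(1+\delta)r))}{\mu(\bl(y,r))} < \frac{1+z}{1-z}(1+\delta)^q < \frac{1+z}{1-z}\left(1+\frac{\varepsilon}{2}\right) < \frac{1}{1-\varepsilon}.
\]
Rearranging yields $\mu(U) \geq (1-\varepsilon) \mu(U^\delta)$, and summing as in the first paragraph gives \eqref{eq.smallexp1}. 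There is no substantive obstacle here: the uniformity of \eqref{eq.expolgr} on the bounded set $X_0$ is precisely what Lemma \ref{l.exgronballs} provides, subadditivity absorbs the loss of disjointness under expansion, and the only delicate point is choosing the constants $\varepsilon_0$, $z$, and $s$ in the right order so that all numerical inequalities are compatible.
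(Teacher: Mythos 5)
Your proof is correct and follows essentially the approach the paper intends (the paper explicitly declines to spell it out, saying it is ``almost identical to the proof of Lemma \ref{l.smallbdr}''). Your per-ball ratio estimate via Lemma \ref{l.exgronballs} mirrors the argument in Lemma \ref{l.smallbdr}, and you correctly flag and handle the one genuine difference — the $\delta$-expansions need not be disjoint — by invoking countable subadditivity after summing the per-ball inequalities over the disjoint $U$'s.
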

The proof is almost identical to the proof of Lemma \ref{l.smallbdr},
so we leave it to the reader. The following lemma, whose proof is also
straightforward, shows that for metric measure spaces of positive
exact polynomial growth there \emph{exist} sufficiently distant points
in balls $\bl(x,r)$ as $r$ is large enough. 
\begin{lemma}
\label{l.distpoint}
Let $(\prX,\mu)$ be a metric measure space of exact polynomial growth
of degree $q>0$ and let $X_0 \subseteq X$ be a bounded set. There
exists a number $s_* = s_*(X_0)$ such that the following assertion
holds. If $\bl(x,r)$ is a ball of radius $r \geq s_*$ with center $x
\in X_0$, then there exist points $y_1, y_2 \in \bl(x,r)$ such that
$d(y_1,y_2)> \frac{2 r} 3$.
\end{lemma}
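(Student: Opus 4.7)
The plan is to argue by volume comparison: if the conclusion fails, then the ball $\bl(x,r)$ must coincide with the strictly smaller ball $\bl(x, \tfrac{2r}{3})$, which is incompatible with exact polynomial volume growth of positive degree $q$.

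First I would negate the conclusion and suppose that no $y_1, y_2 \in \bl(x,r)$ satisfy $d(y_1,y_2) > \tfrac{2r}{3}$, so that $\diam(\bl(x,r)) \leq \tfrac{2r}{3}$. The key observation is that one is free to take $y_1 := x$ itself; the negated hypothesis then forces $d(z,x) \leq \tfrac{2r}{3}$ for every $z \in \bl(x,r)$. Combined with the trivial inclusion $\bl(x, \tfrac{2r}{3}) \subseteq \bl(x,r)$, this yields the equality
\[ \bl(x,r) = \bl(x, \tfrac{2r}{3}), \]
and hence $\mu(\bl(x,r)) = \mu(\bl(x, \tfrac{2r}{3}))$.

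Next I would invoke Lemma \ref{l.exgronballs} to upgrade the pointwise limit in the definition of exact polynomial growth to a uniform statement on the bounded set $X_0$: for any prescribed $\varepsilon>0$ there exists a threshold $T = T(\varepsilon, X_0)$ so that
\[ (1-\varepsilon)\, c_X t^q \leq \mu(\bl(x,t)) \leq (1+\varepsilon)\, c_X t^q \]
holds for every $x \in X_0$ and every $t \geq T$. Because $q > 0$ one may fix $\varepsilon > 0$ small enough that $\tfrac{1-\varepsilon}{1+\varepsilon} > (\tfrac{2}{3})^q$, and then set $s_*(X_0) := \tfrac{3}{2}\, T(\varepsilon, X_0)$, which guarantees that both $r$ and $\tfrac{2r}{3}$ exceed $T$ whenever $r \geq s_*$. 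Applying the two-sided bound at radii $r$ and $\tfrac{2r}{3}$ to the equality $\mu(\bl(x,r)) = \mu(\bl(x, \tfrac{2r}{3}))$ then yields $(1-\varepsilon)\, r^q \leq (1+\varepsilon)\, (\tfrac{2r}{3})^q$, i.e., $\tfrac{1-\varepsilon}{1+\varepsilon} \leq (\tfrac{2}{3})^q$, contradicting the choice of $\varepsilon$.

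There is no substantial obstacle here; the argument is essentially a one-line volume comparison once the correct choice of $y_1$ is made. The only subtlety is that $s_*$ has to be chosen independently of $x \in X_0$, and this is precisely what the uniform-on-bounded-sets assertion of Lemma \ref{l.exgronballs} delivers.
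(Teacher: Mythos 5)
Your proof is correct and is, in essence, the argument the paper has in mind when it says the proof is ``analogous to the proofs of the previous results'': you use Lemma \ref{l.exgronballs} to get a uniform two-sided volume estimate on the bounded set $X_0$, and then compare $\mu(\bl(x,r))$ with $\mu(\bl(x,\tfrac{2r}{3}))$. The observation that one may take $y_1 = x$, turning the failure of the conclusion into the set-theoretic equality $\bl(x,r) = \bl(x,\tfrac{2r}{3})$, is the right reduction, and the choice $s_* = \tfrac{3}{2}T(\varepsilon,X_0)$ correctly ensures both radii clear the uniformity threshold.
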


Finally, we state the last lemma of the section. The proof is
analogous to the proofs of the previous results, and we leave it to
the reader as well.
\begin{lemma}
\label{l.measvsrad}
Let $(\prX,\mu)$ be a metric measure space of exact polynomial growth
of degree $q>0$ and let $X_0 \subseteq X$ be a bounded set. There
exists a number $s_! = s_!(X_0)$ such that the following assertion
holds. If $\bl(x,r_1), \bl(x,r_2)$ are two balls with $x \in X_0$ such
that $r_1,r_2 \geq s_!$,
then
\[
\frac{\mu(\bl(x,r_1))}{\mu(\bl(x,r_2))} > 1+t \quad \text{ for some } t>0
\]
implies that
\[
\frac{r_1}{r_2} > \left( \frac{1+t}{1+2t/3}\right)^{1/q}.
\]
\end{lemma}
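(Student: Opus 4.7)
The plan is to follow the template of the proofs of Lemmas \ref{l.smallbdr} and \ref{l.smallexp}: I would approximate $\mu(\bl(x,r))$ by $c_X r^q$ up to a controllable multiplicative error $z$, and verify that the slack factor $1/(1+2t/3)$ built into the conclusion absorbs this error.

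Since $X_0$ is bounded, it is contained in a ball $W \subseteq X$, so Lemma \ref{l.exgronballs} guarantees that $(\prX,\mu)$ has uniformly exact polynomial growth on $W$. Consequently, for every $z \in (0,1)$ there exists a radius $s(z,X_0)$ such that
\[
(1-z) c_X r^q \leq \mu(\bl(x,r)) \leq (1+z) c_X r^q
\]
for every $x \in X_0$ and every $r \geq s(z,X_0)$.

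Given the parameter $t > 0$ appearing in the statement, I would then fix $z \in (0,1)$ small enough that $\frac{1-z}{1+z} > \frac{1}{1+2t/3}$; this rearranges to $z(3+t) < t$ and is solvable for every $t > 0$. Setting $s_! := s(z,X_0)$, for any $x \in X_0$ and $r_1, r_2 \geq s_!$ satisfying the hypothesis, combining the upper bound at $r_1$ with the lower bound at $r_2$ gives
\[
(r_1/r_2)^q \geq \frac{1-z}{1+z}\cdot \frac{\mu(\bl(x,r_1))}{\mu(\bl(x,r_2))} > \frac{1-z}{1+z}(1+t) > \frac{1+t}{1+2t/3}.
\]
Taking the $q$-th root then yields the desired inequality.

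The only genuine obstacle is pinning down the correct relationship between $z$ and $t$; once one recognizes that $(1-z)(1+2t/3)>1+z$ is the governing inequality, everything else reduces to the quantitative input from Lemma \ref{l.exgronballs}. The factor $(1+t)/(1+2t/3)$ in the statement is tailored precisely to accommodate the multiplicative error in the polynomial approximation of $\mu(\bl(x,r))$.
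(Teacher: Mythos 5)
Your proof is correct and is precisely the argument the paper has in mind: the paper leaves the proof to the reader with the remark that it is analogous to the proofs of Lemmas \ref{l.smallbdr} and \ref{l.smallexp}, and your write-up realizes that analogy (two-sided bound $(1-z)c_X r^q \leq \mu(\bl(x,r)) \leq (1+z)c_X r^q$ from Lemma \ref{l.exgronballs}, then compare extremes; the algebra $\tfrac{1-z}{1+z} > \tfrac{1}{1+2t/3} \Longleftrightarrow z(3+t)<t$ checks out). One caveat worth flagging: you choose $z$ in terms of $t$, so your $s_!$ depends on both $X_0$ and $t$, whereas the statement writes $s_! = s_!(X_0)$. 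This dependence on $t$ is unavoidable with this method --- the slack $1+2t/3$ in the conclusion is of size $O(t)$, so to absorb the multiplicative error $\tfrac{1+z}{1-z}$ one must take $z$ on the order of $t$ --- and it is harmless in practice, since Lemma \ref{l.measvsrad} is invoked in Section \ref{ss.hochman} only at the single fixed value $t=\delta$; the printed $s_!=s_!(X_0)$ is best read as eliding this dependence, in contrast to the explicit $s=s(\varepsilon,X_0)$ of the sibling lemmas.
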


\begin{rem}
\label{r.smallbdrunexgr}
Suppose that $(\prX,\mu)$ is a metric measure space of
\emph{uniformly} exact polynomial volume growth of degree $q>0$. Then it is clear that one
can remove the assumption that
\[
\core(\calC) \subseteq X_0
\]
for a bounded set $X_0$ in Lemmas \ref{l.smallexp} and
\ref{l.smallbdr}. Furthermore, the numbers $s$ depend on $\varepsilon$
and space $(\prX,\mu)$ only. Similarly, the assumption that
\[
x \in X_0
\]
for a bounded set $X_0$ in Lemmas \ref{l.distpoint},
\ref{l.measvsrad} can be
removed and the numbers $s_*,s_!$ depend on the space $(\prX,\mu)$ only.
\end{rem}

\subsection{Groups of Polynomial Growth}
\label{ss.grouppolgr}
Let $\Gamma$ be a finitely generated discrete group and $\{
\gamma_1,\dots,\gamma_k\}$ be a fixed generating set. Each element
$\gamma \in \Gamma$ can be written as a product
$\gamma_{i_1}^{p_1} \gamma_{i_2}^{p_2} \dots \gamma_{i_l}^{p_l}$ for
some indexes $i_1,i_2,\dots,i_l \in 1,\dots,k$ and some integers $p_1,p_2,\dots,p_l \in \Z$. We define the \textbf{norm} of an element $\gamma
\in \Gamma$ by
\[
\| \gamma \|:=\inf\{ \sum\limits_{i=1}^l |p_i|: \gamma =
\gamma_{i_1}^{p_1} \gamma_{i_2}^{p_2} \dots \gamma_{i_l}^{p_l} \},
\]
where the infinum is taken over all representations of $\gamma$ as a
product of the generating elements. The norm $\| \cdot \|$ on $\Gamma$
can, in general, depend on the generating
set, but it is easy to show \cite[Corollary 6.4.2]{ceccherini2010} that
two different generating sets produce equivalent norms. We will always
say what generating set is used in the definition of a norm, but we will
omit an explicit reference to the generating set later on. 

The norm $\| \cdot \|$ yields a right invariant metric on $\Gamma$
defined by
\[
d_R(x,y):=\| x y^{-1}\| \quad \text{ for } x,y \in \Gamma,
\]
and a left invariant metric on $\Gamma$ defined by
\[
d_L(x,y):=\| x^{-1} y\| \quad \text{ for } x,y \in \Gamma,
\]
which we call the \textbf{word metrics}. The right invariance of $d_R$ means that
the right multiplication
\[
R_g: \Gamma \to \Gamma, \quad x \mapsto x g \quad ( x \in \Gamma)
\]
is an isometry for every $g \in \Gamma$ with respect to
$d_R$. Similarly, the left invariance of $d_L$ means that the left
multiplications are isometries with respect to $d_L$.
We let $d:=d_R$ and view
$\Gamma$ as a metric space with the metric $d$. For $x\in \Gamma$, $r
\in \Rp$ let
\[
\bl(x,r):=\{ y \in \Gamma: d(x,y) \leq r\}
\]
be the closed ball of radius $r$ with center $x$ with respect to $d$. Let $\ue \in \Gamma$ be the neutral element. It is clear that 
\[
\bl(n) = \{ y: d_R(\ue,y) \leq n\} = \{ y:
d_L(\ue,y) \leq n\} = \{ y: \| y \| \leq n \},
\]
i.e., the ball $\bl(n)$ is precisely the ball $\bl(\ue, n)$ with respect to the
left and the right word metric. Furthermore,
\[
\cntm{\bl(x,n)} = \cntm{\bl(n)}
\]
for all $x \in \Gamma$, $n \geq 0$.

We say that the group $\Gamma$ is of
\textbf{polynomial growth} if there are constants $C,d>0$ such that
for all $n \geq 1$ we have
\[
\cntm{\bl(n)} \leq C n^d.
\]
\begin{exa}
\label{ex.zdex}
Consider the group $\Z^d$ for $d \in \N$ and let $\gamma_1,\dots,\gamma_d \in \Z^d$ be the
standard basis elements of $\Z^d$. That is, $\gamma_i$ is defined by
\[
\gamma_i(j):=\delta_i^j \quad (j=1,\dots, d)
\] 
for all $i=1,\dots,d$. We consider the generating set given by elements $\sum\limits_{k \in I} (-1)^{\varepsilon_k}\gamma_k$ for all
subsets $I \subseteq [1,d]$ and all functions $\varepsilon_{\cdot} \in
\{ 0,1\}^I$. Then it is easy
to see by induction on dimension that $\bl(n) = [-n,\dots,n]^d$, hence
\[
\cntm{\bl(n)} = (2n+1)^d \quad \text{ for all } n \in \N
\]
with respect to this generating set, i.e., $\Z^d$ is a group of polynomial growth.
\end{exa}

Let $d \in \Zp$. We say that the group $\Gamma$ has \textbf{polynomial growth
of degree $d$} if there is a constant $C>0$ such that
\[
\frac 1 C n^d \leq \cntm{\bl(n)} \leq C n^d \quad \text{ for all } n \in \N.
\]
It was shown in \cite{bass1972} that, if $\Gamma$ is a finitely
generated nilpotent group, then $\Gamma$ has polynomial growth of some
degree $d \in \Zp$. Furthermore, one can show \cite[Proposition
6.6.6]{ceccherini2010} that if $\Gamma$ is a group and $\Gamma' \leq \Gamma$
is a finite index, finitely generated nilpotent subgroup, having
polynomial growth of degree $d \in \Zp$, then the group $\Gamma$ has
polynomial growth of degree $d$. The
converse is true as well: it was proved in \cite{gromov1981}
that, if $\Gamma$ is a group of polynomial growth, then there exists a
finite index, finitely generated nilpotent subgroup $\Gamma' \leq
\Gamma$. It follows that if $\Gamma$ is a group of polynomial growth
with the growth function $\gamma$, then there is a constant $C>0$ and
an integer $d\in \Zp$, called the \textbf{degree of polynomial growth}, such that
\[
\frac 1 C n^d \leq \cntm{\bl(n)} \leq C n^d \quad \text{ for all } n \in \N.
\]
An even stronger result was obtained in \cite{pansu1983}, where it is
shown that, if $\Gamma$ is a group of polynomial growth of degree $d
\in \Zp$, then the limit
\begin{equation}
\label{eq.pansu}
c_{\Gamma}:=\lim\limits_{n \to \infty} \frac{\cntm{\bl(n)}}{n^d}
\end{equation}
exists. It follows that a group of polynomial growth of degree $d \in
\Zp$ is a metric measure space of uniformly exact polynomial volume
growth of degree $d$. 

\section{Covering Lemmas}
\label{s.covlem}
First, we state the basic finitary Vitali covering lemma. The proof is
well-known, so we omit it.
\begin{lemma}
\label{l.finitvitcov}
Let $\calC:=\{B_1,\dots,B_n\}$ be a finite collection of balls in a metric
space $\prX$. Then there exists a disjoint subcollection $\calC_0
\subseteq \calC$ such that
\[
\bigcup\limits_{W \in \calC} W \subseteq \bigcup\limits_{W \in \calC_0}
3 \cdot W.
\]
\end{lemma}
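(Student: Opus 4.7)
The plan is to use the classical greedy selection argument. Order the balls in $\calC$ by decreasing radius, breaking ties arbitrarily, so that we may assume $\rad(B_1) \geq \rad(B_2) \geq \dots \geq \rad(B_n)$. I would then construct $\calC_0$ inductively: put $B_1 \in \calC_0$, and for each subsequent $i$, add $B_i$ to $\calC_0$ if and only if $B_i$ is disjoint from every ball already selected. The resulting subcollection is disjoint by construction, so the only thing to verify is the covering property $\bigcup_{B \in \calC} B \subseteq \bigcup_{W \in \calC_0} 3 \cdot W$.

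For this, I would argue as follows. Fix an arbitrary $B_j \in \calC$. If $B_j \in \calC_0$ we are done, since $B_j \subseteq 3 \cdot B_j$. Otherwise, the greedy rule guarantees that $B_j$ was rejected because it intersects some $W \in \calC_0$ that was chosen earlier, i.e., with $\rad(W) \geq \rad(B_j)$. Write $W = \bl(z,r)$ and $B_j = \bl(y,s)$ with $r \geq s$, and pick any $x \in W \cap B_j$. The triangle inequality gives $d(y,z) \leq d(y,x) + d(x,z) \leq s + r$, hence for every $p \in B_j$ we get $d(p,z) \leq d(p,y) + d(y,z) \leq 2s + r \leq 3r$. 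Therefore $B_j \subseteq \bl(z, 3r) = 3 \cdot W$, which is exactly the claim.

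There is no real obstacle here; the only care needed is to make the greedy selection well-defined (which is immediate because $\calC$ is finite) and to observe that the argument uses only the triangle inequality, so it goes through in any metric space without any assumption on $\prX$. In particular we do not need $\prX$ to be of exact polynomial growth, doubling, or even separable, which is why this lemma can serve as the elementary building block for the more sophisticated covering theorems developed later in Section \ref{s.covlem}.
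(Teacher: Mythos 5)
Your proof is correct, and it is the standard greedy argument for the finitary Vitali covering lemma; the paper itself omits the proof, remarking only that it is well-known, so there is nothing to compare against. One very minor point: the lemma as stated does not require the balls in $\calC$ to be closed or to come with specified centers and radii, but the operation $3\cdot W$ is defined in the paper only for a ball given together with a concrete representation $W=\bl(y,s)$; your proof implicitly (and correctly) uses such a representation, which is consistent with the paper's convention that balls are always presented with their centers and radii.
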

\noindent As a consequence, we derive the following version of the Vitali lemma,
which we will later use in the proofs.
\begin{lemma}
\label{l.vitcov}
Let $(\prX,\mu)$ be a metric measure space of exact polynomial growth
of degree $q$
and let $X_0 \subseteq X$ be a bounded set. Then there exists a number
$s'=s'(X_0)>0$
such that the following assertion holds. If $\calC$ is a finite
collection of balls in $X$ such
that
\[
\rad(\calC)  \geq s'
\]
and
\[
\core(\calC) \subseteq X_0,
\]
then there exists a disjoint subcollection $\calC_0 \subseteq \calC$ such
that
\[
\mu\left( \bigcup\limits_{ V \in \calC_0} V \right) \geq \frac{1}{3^{q+1}}
\mu\left( \bigcup\limits_{V \in \calC} V \right)
\]
\end{lemma}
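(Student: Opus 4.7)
The plan is to combine the finitary Vitali lemma (Lemma \ref{l.finitvitcov}) with a uniform estimate of the form $\mu(3 \cdot V) \leq 3^{q+1} \mu(V)$, valid for all balls $V$ whose center lies in $X_0$ and whose radius is sufficiently large. Such an estimate is the natural substitute for the doubling property, which may fail in the absence of uniform exact polynomial growth (see Remark \ref{r.nondoubling}).

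To obtain the estimate, observe that the bounded set $X_0$ is contained in some closed ball of $\prX$, so Lemma \ref{l.exgronballs} guarantees that the convergence
\[
\frac{\mu(\bl(y,t))}{c_X t^q} \longrightarrow 1 \quad (t \to \infty)
\]
is uniform in $y \in X_0$. In particular, fixing once and for all some $\varepsilon \in (0,1)$ with $\frac{1+\varepsilon}{1-\varepsilon} \leq 3$, there exists $s' = s'(X_0) > 0$ so that for every $y \in X_0$ and every $r \geq s'$,
\[
\frac{\mu(\bl(y,3r))}{\mu(\bl(y,r))} \leq 3^q \cdot \frac{1+\varepsilon}{1-\varepsilon} \leq 3^{q+1}.
\]
Thus $\mu(3 \cdot V) \leq 3^{q+1} \mu(V)$ whenever $V \in \calC$, since $\rad(\calC) \geq s'$ and $\core(\calC) \subseteq X_0$.

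With this in hand, I apply Lemma \ref{l.finitvitcov} to extract a disjoint subcollection $\calC_0 \subseteq \calC$ such that $\bigcup_{W \in \calC} W \subseteq \bigcup_{W \in \calC_0} 3 \cdot W$. Using monotonicity, subadditivity, the volume estimate above, and the disjointness of $\calC_0$, we compute
\[
\mu\!\left( \bigcup_{W \in \calC} W \right) \leq \sum_{V \in \calC_0} \mu(3 \cdot V) \leq 3^{q+1} \sum_{V \in \calC_0} \mu(V) = 3^{q+1}\, \mu\!\left( \bigcup_{V \in \calC_0} V \right),
\]
and the conclusion follows upon dividing by $3^{q+1}$.

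The only nontrivial ingredient is the uniform volume comparison on $X_0$; the rest is bookkeeping around Lemma \ref{l.finitvitcov}. I expect no real obstacle, as long as one is careful to invoke Lemma \ref{l.exgronballs} on a ball containing $X_0$ rather than appealing to a nonexistent doubling property of $(\prX,\mu)$.
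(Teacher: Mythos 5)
Your proof is correct and follows essentially the same route as the paper: choose $s'$ large enough that $\mu(3\cdot V) \leq 3^{q+1}\mu(V)$ uniformly for balls with center in $X_0$ and radius at least $s'$ (via Lemma~\ref{l.exgronballs}), then apply the finitary Vitali Lemma~\ref{l.finitvitcov} and combine. You simply spell out explicitly the quantitative estimate that the paper dispatches with ``such $s'$ exists because the volume growth is uniformly polynomially exact on $X_0$.''
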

\begin{proof}
Let $s' \in \Rps$ be large enough such that for all balls $V=\bl(x,s)$
with $x \in X_0$ and $s \geq s'$ we have
\begin{equation}
\label{eq.3enlgr}
\mu(3 \cdot V) < 3^{q+1} \mu(V).
\end{equation}
Such $s'$ exists because the volume growth is uniformly polynomially
exact on $X_0$. Next, given a collection $\calC$ satisfying the
assertions of the theorem, we use Lemma \ref{l.finitvitcov} to obtain
a disjoint subcollection $\calC_0 \subseteq \calC$ such that 
\[
\bigcup\limits_{V \in \calC} V \subseteq \bigcup\limits_{V \in \calC_0}
3 \cdot V.
\]
Combining this with Equation \eqref{eq.3enlgr}, we obtain the
statement of the lemma.
\end{proof}
\begin{rem}
\label{r.smallbdrunexgr1}
Similar to Remark \ref{r.smallbdrunexgr} above we can note that for metric measure spaces of
\emph{uniformly} exact polynomial growth the assumption in Lemma
\ref{l.vitcov} that
\[
\core(\calC) \subseteq X_0
\]
for a bounded set $X_0$ can be dropped. In this case the number $s'$
does depends only on the space $(\prX,\mu)$. 
\end{rem}

When the original collection $\calC$ in Lemma \ref{l.vitcov}
satisfies certain additional assumptions, applying the lemma multiple
times can yield a disjoint subcollection which `almost covers'
$\bigcup\limits_{V \in \calC} V$. In order to make this precise, we
start with some definitions. Let $\prX$ be a metric space, $W \subseteq X$ be a
nonempty subset and $n \in \N$ be an integer. A \textbf{tower} $\calU$
over $W$ is a finite
collection of balls
\[
\calU = \{ U_i(x): x \in W, i = 1,\dots, n\}
\]
indexed by points $x \in W$ and integers $i=1,\dots,n$, which satisfies the following assertions:
\begin{aufziii}
\item $x \in U_i(x)$ for all $x \in W$ and all $i=1,\dots,n$;
\item $U_1(x) \subseteq U_2(x) \subseteq \dots \subseteq U_n(x)$ for all $x \in W$. 
\end{aufziii}
The number $n$ is called the \textbf{height} of a tower, and we denote
the height of a tower $\calU$ by $\hght(\calU)$. We stress that the
set $W$ might be infinite, but a tower $\calU$ over $W$ is always a
finite collection of balls. A tower $\calU$ over $W$ of height $n$ is
called \textbf{centered} if for every point $x \in W$ and every index $i=1,\dots,n$ the
ball $U_i(x)$ is of the form $\bl(x,r)$ for some $r \geq 0$. For a
tower $\calU$ of height $n$ and every index $i=1,\dots,n$ let
\[
\calU_i:=\{ U_i(x): U_i(x) \in \calU\}
\]
be the $i$-th level of the tower $\calU$ and let
\[
\calU_{\leq i}:=\bigcup\limits_{j=1}^i \calU_j,
\]
\[
\calU_{\geq i}:=\bigcup\limits_{j=i}^n \calU_j
\]
be the towers obtained from the tower $\calU$ by taking the first $i$
levels and the last $n-i+1$ levels respectively. The balls in $\calU_i$ will
be call \textbf{$i$-th level balls}. Given a
number $\delta \in \Rp$, a tower $\calU$ over $W \subseteq X$ of
height $n \in \N$ is called
\textbf{$\delta$-expanding} if
\begin{equation}
\label{eq.exptowdef}
U_i(x)^{\delta} \subseteq U_{i+1}(x) \quad \text{ for all } x \in W, i = 1,\dots,n-1.
\end{equation}
Finally, given a collection of balls $\calC$, we denote by $[\calC]$ the collection
of all maximal balls in $\calC$ with respect to the set inclusion,
i.e.,
\[
[ \calC ]=\{ U \in \calC: U \subseteq V \Rightarrow U = V \text{ for
  all } V \in \calC \}. 
\]
We will call the balls in $[ \calC ]$ \textbf{maximal}. It is
clear that if $\calU$ is a tower, then 
\[
\bigcup\limits_{U \in [\calU_n]} U = \bigcup\limits_{U \in \calU} U.
\]

\begin{lemma}
\label{l.maxballs}
Let $\prX$ be a metric space and $\varepsilon \in (0,1)$ be
arbitrary. Suppose that $\calU$ is a $\left( 1 + \frac 4 {\varepsilon}
\right)$-expanding tower over some subset $W \subseteq X$ of height
$n>1$. Then for all indexes $1 \leq i < j \leq n$ and for all balls $U
\in \calU_i$, $V \in [\calU_j]$ we have
\[
U \cap V \neq \varnothing \Rightarrow U \subseteq V^{\varepsilon}.
\]
\end{lemma}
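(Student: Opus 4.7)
The plan is to reduce the statement to a radius comparison: if $U = \bl(c_U, r)$ and $V = \bl(y_V, R)$ with $p \in U \cap V$, then for any $q \in U$ the triangle inequality gives $d(q, y_V) \leq d(q, c_U) + d(c_U, p) + d(p, y_V) \leq 2r + R$, so it suffices to show that $2r + R \leq (1+\varepsilon) R$, i.e., $R \geq 2r/\varepsilon$. The whole lemma thus boils down to proving that $V$ has radius at least $2r/\varepsilon$, and this is where the maximality of $V$ in $\calU_j$ and the $(1+4/\varepsilon)$-expansion must enter.

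To get the radius bound, write $U = U_i(x)$ and $V = U_j(y)$ with $x, y \in W$, and consider the ball $U_j(x) \in \calU_j$ sitting directly above $U$ in the tower over $x$. I would split into two cases based on the relation of $V$ to $U_j(x)$. In the easy case where $V \subseteq U_j(x)$, the maximality of $V$ forces $V = U_j(x)$, and hence $U = U_i(x) \subseteq U_j(x) = V \subseteq V^{\varepsilon}$, finishing the argument immediately. In the remaining case I pick a witness $z \in V \setminus U_j(x)$; since the tower is $(1+4/\varepsilon)$-expanding and $j \geq i+1$, the chain $U_i(x)^{1+4/\varepsilon} \subseteq U_{i+1}(x) \subseteq \dots \subseteq U_j(x)$ guarantees $z \notin U_i(x)^{1+4/\varepsilon}$, i.e.\ $d(z, c_U) > (2 + 4/\varepsilon)\,r$.

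Now I use this escape distance together with the triangle inequality through $p \in U \cap V$ and $y_V$: one gets
\[
(2+4/\varepsilon)\,r \;<\; d(z, c_U) \;\leq\; d(z, y_V) + d(y_V, p) + d(p, c_U) \;\leq\; R + R + r \;=\; 2R + r,
\]
which rearranges to $2R > (1 + 4/\varepsilon)\,r$, hence $R > r/2 + 2r/\varepsilon \geq 2r/\varepsilon$. Plugging this back into the earlier estimate, for any $q \in U$ we get $d(q, y_V) \leq 2r + R < \varepsilon R + R = (1+\varepsilon) R$, so $q \in V^{\varepsilon}$, which gives $U \subseteq V^{\varepsilon}$ as required.

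I do not expect any serious obstacles: the statement is essentially a packaged triangle inequality, and the role of the hypotheses is clear. The only delicate point is remembering that balls in the tower carry a fixed center and radius (the paper's convention), so that expressions like $U_i(x)^{1+4/\varepsilon}$ and $V^{\varepsilon}$ are well-defined, and that maximality of $V$ in $\calU_j$ must be phrased in terms of containment $V \subseteq U_j(x) \Rightarrow V = U_j(x)$ rather than in terms of intersection. Once the correct case split is made, the calculation is mechanical.
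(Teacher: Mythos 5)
Your proof is correct. It rests on exactly the same two ingredients as the paper's proof --- the triangle inequality giving a radius comparison between $U$ and $V$, and the interplay between the $(1+4/\varepsilon)$-expansion and maximality of $V$ in $\calU_j$ --- but you organize it as a direct argument with a case split (either $V \subseteq U_j(x)$, in which case maximality forces $V=U_j(x)$ and the conclusion is immediate, or there is an escape point $z \in V \setminus U_j(x)$ which forces $R > 2r/\varepsilon$ and hence $U \subseteq V^{\varepsilon}$), whereas the paper argues by contradiction: assuming $U \not\subseteq V^{\varepsilon}$, it deduces $s \geq \varepsilon t/2$ from the fact that $U$ cannot fit inside the annulus $V^{\varepsilon}\setminus V$, hence $V \subsetneq U^{1+4/\varepsilon} \subseteq U_j(x)$, contradicting maximality. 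The two derivations are essentially contrapositives of one another; your version is somewhat more explicit about the triangle-inequality bookkeeping and avoids the paper's informal phrase about the ball ``not fitting in the annulus,'' while the paper's version is a bit more compact. Either is a fine proof of the lemma.
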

\begin{proof}
Suppose that $U=\bl(x,s) \in
\calU_i$ and $V = \bl(y,t) \in [\calU_j]$ are some balls such that
\[
U \cap V \neq \varnothing
\]
and $U$ is \emph{not} a subset of $V^{\varepsilon}$. Then the ball $U$ does not
fit into the annulus $V^{\varepsilon} \setminus V$ of width
$\varepsilon t$, hence
\[
\varepsilon t \leq 2 s.
\] 
This implies that $s \geq \frac{\varepsilon t}{2}$, hence $\left( 2 +
  \frac 4 {\varepsilon} \right)s \geq 2s+2t$. We conclude that $V \subsetneq
U^{1 + \frac 4 {\varepsilon}}$, which is a contradiction since the ball $V$ is maximal
and the ball $U^{1 + \frac 4 {\varepsilon}}$ is contained in some ball
from $\calU_j$ by the definition of a tower.
\end{proof}

To simplify the presentation, we fix the following notation till the
end of this section. Let $(\prX, \mu)$ be an arbitrary metric measure space of exact polynomial growth
of degree $q \in \Rps$ and let $X_0 \subseteq X$ be a bounded
set. Define the constant
\[
C:=\frac{3^{q+1}}{3^{q+1}-1}.
\]
After these preparations, we can prove the so-called effective Vitali
covering theorem. The proof is based on the proof of the effective
Vitali covering lemma from \cite{mor2016}, where the results of
S. Kalikow and B. Weiss from \cite{kw1999} were generalized.

\begin{thm}[`Effective Vitali Covering']
\label{t.evc}
Let $\varepsilon \in (0,1)$ be small enough. Then there exist a number
$s_0 = s_0(\varepsilon,X_0)$ such that for a tower $\calU$ satisfying the
assertions
\begin{aufziii}
\item $\calU$ is $\left( 1 + \frac{36 q}{\varepsilon} \right)$-expanding;
\item $\core(\calU) \subseteq X_0$;
\item $\rad(\calU) \geq s_0$;
\item $\hght(\calU) \geq 1+\log_{C} \frac{2}{\varepsilon}$
\end{aufziii}
there exists a disjoint subcollection $\calV \subseteq \calU$ such
that
\begin{equation*}
\mu\left( \bigcup\limits_{V \in \calV} V\right) \geq (1 - \varepsilon) \mu\left( \bigcup\limits_{W \in \calU_1} W\right)
\end{equation*}
\end{thm}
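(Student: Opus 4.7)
\begin{altproof}
The plan is to apply Lemma \ref{l.vitcov} repeatedly from the top level of the tower downward, picking a disjoint subfamily $\calV$ one level at a time. Set $n := \hght(\calU)$ and $\varepsilon' := \varepsilon/(9q)$, so that the expansion hypothesis $1 + 36q/\varepsilon = 1 + 4/\varepsilon'$ matches the input of Lemma \ref{l.maxballs}. Choose $s_0$ large enough that both Lemma \ref{l.vitcov} (with constant $s'$ for the set $X_0$) and Lemma \ref{l.smallexp} (with tolerance of order $\varepsilon$ for the set $X_0$) can be invoked on any disjoint subfamily of $\calU$ built below.

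Set $T_{n+1} := \varnothing$ and recursively, for $i = n, n-1, \ldots, 1$, put
\[
\calU_i' := \{U \in [\calU_i] : U \cap T_{i+1} = \varnothing\},
\]
apply Lemma \ref{l.vitcov} to obtain a disjoint family $\calV_i \subseteq \calU_i'$ with $\mu(\bigcup \calV_i) \geq 3^{-(q+1)}\mu(\bigcup \calU_i')$, and put $T_i := T_{i+1} \cup \bigcup \calV_i$. The family $\calV := \bigsqcup_{i=1}^n \calV_i$ is disjoint by construction: within each level by Vitali, and across levels because $\calV_i$ is drawn from balls disjoint from $T_{i+1}$.

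The key geometric input from Lemma \ref{l.maxballs} is that a maximal ball $U \in [\calU_i]$ which meets $T_{i+1}$ must meet some $V \in \calV_j$ for a level $j > i$, hence $U \subseteq V^{\varepsilon'} \subseteq T_{i+1}^{\varepsilon'}$. This yields the inclusion $\bigcup \calU_i \setminus T_{i+1}^{\varepsilon'} \subseteq \bigcup \calU_i'$, so Vitali covers a fixed fraction $1/3^{q+1}$ of the uncovered portion of $\bigcup \calU_i$, modulo the thin $\varepsilon'$-expansion shell of previously selected balls whose measure Lemma \ref{l.smallexp} controls by a small fraction of $\mu(T_{i+1})$. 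Iterating this argument across all $n$ levels produces an exponential decay $C^{-n}$, with $C = 3^{q+1}/(3^{q+1}-1)$, of the uncovered portion of $\bigcup \calU_n$, accumulated against a geometric series of expansion errors. The hypothesis $n \geq 1 + \log_C(2/\varepsilon)$ forces $C^{-n} \leq \varepsilon/(2C) \leq \varepsilon/2$, and for $\varepsilon$ small enough the summed expansion errors remain below $(\varepsilon/2)\mu(\bigcup \calU_n)$. One concludes $\mu(\bigcup \calV) = \mu(T_1) \geq (1-\varepsilon)\mu(\bigcup \calU_n) \geq (1-\varepsilon)\mu(\bigcup \calU_1)$, where the last inequality is the tower inclusion $\bigcup \calU_1 \subseteq \bigcup \calU_n$.

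The main obstacle is the bookkeeping for the expansion losses: the cumulative $\varepsilon'$-expansion error over the $n$ levels, weighted by the geometric factors $C^{-(j-1)}$ coming from the Vitali iteration, must be bounded by $(\varepsilon/2)\mu(\bigcup \calU_n)$. Since summing the geometric series introduces the multiplicative constant $C/(C-1) = 3^{q+1}$, the tuning $\varepsilon' = \varepsilon/(9q)$ together with the clause ``$\varepsilon$ small enough'' in the hypothesis is what absorbs this factor and closes the estimate; the lower radius bound $s_0$ is then what ensures that Lemma \ref{l.smallexp} converts each $\varepsilon'$-expansion shell into the required $\varepsilon$-fraction of the ambient measure.
\end{altproof}
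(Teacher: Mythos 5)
Your plan follows the paper's strategy closely: the paper also performs a top-down Vitali iteration, applies Lemma~\ref{l.vitcov} to the maximal balls at each level, uses Lemma~\ref{l.maxballs} to absorb the lower-level balls that hit previously-selected balls into their expansions, and invokes Lemma~\ref{l.smallexp} to trade the expansion slack for an $\varepsilon$-loss; the calibration $\delta=\varepsilon/(9q)$ and the role of the height hypothesis $n\geq 1+\log_C(2/\varepsilon)$ are the same. So this is not a genuinely different route.

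There is, however, a genuine error in the invariant you run the induction against. You claim the iteration produces $\mu(T_1)\geq(1-\varepsilon)\mu\bigl(\bigcup\calU_n\bigr)$ and then pass to $\bigcup\calU_1$ via the tower inclusion. That stronger statement is false in general, and the displayed chain does not follow from the recursion you set up. After the Vitali pass at level $n$, the selected balls cover only a $3^{-(q+1)}$-fraction of $\bigcup\calU_n$; the subsequent passes at levels $i<n$ draw their candidates from $[\calU_i]$ minus the balls touching $T_{i+1}$, and $\bigcup\calU_i$ can be much smaller than $\bigcup\calU_n$. Concretely, if one center $x_0$ has a huge top-level ball $\bl(x_0,R)$ and many other centers $x_j$ sit on its sphere of radius $R$ with moderately large $U_n(x_j)$, then $T_n=\bl(x_0,R)$, every $[\calU_i]$-ball at a lower level still meets $T_n$ (hence $\calU_i'=\varnothing$ for all $i<n$), so $T_1=T_n$ covers only a $3^{-(q+1)}$-fraction of $\bigcup\calU_n$ while the theorem's actual conclusion holds trivially since $\mu\bigl(\bigcup\calU_1\bigr)$ is tiny by comparison. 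The correct invariant — used in the paper — is that the uncovered portion of $\bigcup\calU_1$ decays by $C^{-1}$ per level: since $\bigcup\calU_1\subseteq\bigcup\calU_i$ for every $i$, each Vitali pass recovers a $3^{-(q+1)}$-fraction of what remains of $\bigcup\calU_1$ modulo the expansions, so after $n$ steps the deficit is $\leq C^{-n}\mu\bigl(\bigcup\calU_1\bigr)$. Your argument should be rephrased entirely in terms of $\mu\bigl(\bigcup\calU_1\bigr)$; the tower inclusion $\bigcup\calU_1\subseteq\bigcup\calU_n$ is then not needed at the end.

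A secondary, smaller point on bookkeeping: you keep $T_i$ unexpanded and propose to subtract an expansion-shell error at every level, which forces you to control a geometric sum of errors weighted against varying denominators. The paper sidesteps this by letting the covered set at each stage be the union of $\delta$-\emph{expansions} $I_{k}$ of the selected balls (so Lemma~\ref{l.maxballs} gives a clean inclusion with no residual shell), and then applying Lemma~\ref{l.smallexp} exactly once at the end to replace $\mu\bigl(\bigcup V^{\delta}\bigr)$ by $\mu\bigl(\bigcup V\bigr)$ at the cost of a factor $1-\varepsilon/2$. Adopting that bookkeeping would make your ``summed expansion errors remain below $(\varepsilon/2)\mu(\cdot)$'' step precise rather than heuristic.
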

\begin{proof}
First of all, we specify some of the parameters of the lemma. By
`small enough $\varepsilon$' we mean that $\varepsilon<\varepsilon_0$,
where $\varepsilon_0$ is given by Lemma \ref{l.smallexp}. Let $s'$ be the
parameter provided by Lemma \ref{l.vitcov} and $s=s(\varepsilon/2,X_0)$ be the
parameter provided by Lemma \ref{l.smallexp}. We define 
\[
s_0(\varepsilon,X_0):=\max(s',s(\varepsilon/2,X_0)).
\]
Let $n:=\hght(\calU)$ be height of
the tower $\calU$ and let $\delta:=\frac{\varepsilon}{9 q}$. For each $i=1,\dots,n$ let
\[
U_i := \bigcup\limits_{V \in \calU_i} V
\]
be the union of $i$-th level balls. The goal is to show that there
exists a collection of disjoin balls $\calV \subseteq \calU$ such that
\begin{equation}
\label{eq.evcgoal}
\mu\left( \bigcup\limits_{V \in \calV}  V^{\delta}\right) \geq (1 - \varepsilon/2) \mu\left( \bigcup\limits_{W \in \calU_1} W\right).
\end{equation}
Once such a collection is found, an application of Lemma
\ref{l.smallexp} would complete the proof of the theorem since
$\delta < \frac{1}{4q} \cdot \frac{\varepsilon} 2$.

The main idea of the proof is to cover a
positive fraction of $U_n$ by disjoint union of maximal $n$-level
balls from $\calU_n$ using Lemma \ref{l.vitcov}, then cover a positive
fraction of the remainder of $U_{n-1}$ by a disjoint union of
$(n-1)$-level balls and so on. We make this intuition formal below.

Let $\calC_n \subseteq \calU_n$ be the collection of disjoint balls,
which we obtain by applying Lemma \ref{l.vitcov} to the collection of
maximal $n$-level balls $[\calU_n]$. It follows that
\[
\mu\left( \bigcup\limits_{V \in \calC_n} V \right) \geq \frac{1}{3^{q+1}}
\mu\left( U_n \right)
\]
Let $S_n:=\bigsqcup\limits_{V \in \calC_n} V$. The computation above
shows that 
\begin{equation}
\label{eq.stepa}
S_n \text{ covers at least a } \frac 1 {3^{q+1}} \text{-fraction of } U_n
\end{equation}
and
\begin{equation}
\label{eq.stepb}
\mu( U_1 ) - \mu( S_n ) \leq \mu( U_1 ) - \frac{1}{3^{q+1}} \mu( U_1 ) =
C^{-1} \mu( U_1 ).
\end{equation}

In general, suppose by induction that the collections of disjoint
balls 
\[
\calC_n, \calC_{n-1},\dots, \calC_{n-k+1}
\]
with the respective unions
\[
S_n,S_{n-1},\dots,S_{n-k+1}
\]
have been constructed, where $k \geq
1$. Define the union of the corresponding $\delta$-expansions
\[
I_{k-1}:=\bigcup\limits_{V \in \calC_n \cup \dots \cup \calC_{n-k+1}} V^{\delta}.
\]
By induction, we assume that
\begin{equation}
\label{eq.indassert}
\mu \left( I_{k-1} \right) \geq (1-C^{-k+1}) \mu(U_1).
\end{equation}
Let
\begin{align*}
\widetilde \calC_{n-k}:=\{ V: \ &V \in [\calU_{n-k}] \text{ is a
                                           maximal } (n-k)-\text{level
                                           ball} \\
&\text{ such that } V \cap (S_n \cup S_{n-1} \cup \dots \cup S_{n-k+1}) = \varnothing\}
\end{align*}
be the collection of all maximal $(n-k)$-level balls disjoint from
$S_n \cup S_{n-1} \cup \dots \cup S_{n-k+1}$ and let $\widetilde
S_{n-k}$ be its union. We
apply Lemma \ref{l.vitcov} once to obtain a collection $\calC_{n-k}
\subseteq \widetilde \calC_{n-k}$ of pairwise disjoint balls such that
\[
\mu\left( \bigsqcup\limits_{V \in \calC_{n-k}} V \right) \geq \frac 1 {3^{q+1}}
\mu( \widetilde S_{n-k} )
\] 
and let $S_{n-k}:=\bigsqcup\limits_{V \in \calC_{n-k}} V$. We want to show that
\begin{equation}
\label{eq.snest}
\mu\left( \bigcup\limits_{V \in \calC_n \cup \dots \cup \calC_{n-k+1}} V^{\delta} \cup S_{n-k} \right) \geq (1-C^{-k}) \mu(U_1)
\end{equation}
and to do so it suffices to prove that
\begin{equation}
\label{eq.snm1}
\mu\left( \bigcup\limits_{V \in \calC_n \cup \dots \cup \calC_{n-k+1}}
  V^{\delta} \cup S_{n-k} \right) \geq \mu(I_{k-1}) + \frac 1 {3^{q+1}} \mu( U_{n-k}
  \setminus I_{k-1}),
\end{equation}
due to the inductive assertion above (Equation \eqref{eq.indassert}).
We decompose the set $U_{n-k} \setminus I_{k-1}$ as follows:
\[
U_{n-k} \setminus I_{k-1} = \widetilde S_{n-k} \sqcup \left( U_{n-k}
  \setminus (I_{k-1} \cup \widetilde S_{n-k})\right).
\]
The part $U_{n-k}
  \setminus (I_{k-1} \cup \widetilde S_{n-k})$ is covered by the $(n-k)$-level balls
  intersecting $S_n \cup S_{n-1} \cup \dots \cup S_{n-k+1}$. Hence, due to Lemma \ref{l.maxballs}, the set $U_{n-k}
  \setminus (I_{k-1} \cup \widetilde S_{n-k})$ is covered by the $\delta$-expansions of
  balls in $\calC_n \cup \dots \cup \calC_{n-k+1}$. Next, $S_{n-k}$
  covers at least a $\frac 1 {3^{q+1}}$-fraction of the set
  $\widetilde S_{n-k}$. It follows that the set 
\[
\bigcup\limits_{V \in \calC_n \cup \dots \cup \calC_{n-k+1}}
  V^{\delta} \cup S_{n-k}
\]
covers the set $I_{k-1}$ and at least a $\frac 1
  {3^{q+1}}$-fraction of the set $U_{n-k} \setminus I_{k-1}$. Thus we have proved
  inequalities \eqref{eq.snm1} and \eqref{eq.snest}. 

It is obvious that one can continue in this way down to the $1$-st
level balls. This would yield a collection of maximal balls
\[
\calC:=\bigcup\limits_{i=1}^n \calC_i
\]
so that the measure of the union of $\delta$-expansions of balls in
$\calC$ is at least $\left( 1- C^{-n}\right)$ times the measure of
$U_1$. Since $n \geq 1+ \log_C\frac{2}{\varepsilon}$, we deduce that
the proof of the inequality \eqref{eq.evcgoal} is complete. 
\end{proof}

\begin{rem}
\label{r.smallbdrunexgr3}
We note that for metric measure spaces of
\emph{uniformly} exact polynomial growth the assumption in Theorem
\ref{t.evc} that
\[
\core(\calU) \subseteq X_0
\]
for a bounded set $X_0$ can be dropped. In this case the parameter
$s_0$ depends on $\varepsilon$ and the space $(\prX,\mu)$ only.
\end{rem}

Theorem \ref{t.evc} will play an essential role in the proof of the
following result. For convenience, we fix some additional notation till the end of the
section. We let
$0< \varepsilon < \varepsilon_0$ be arbitrary, and define
\[
s_1 := \max( s_0(\varepsilon,X_0), s_*(X_0))
\]
to be the maximum of the corresponding parameters provided by Theorem
\ref{t.evc} and Lemma \ref{l.distpoint}. Fix $L \in \Zp$.

Assume that $\calU$ and $\calV$
are two centered towers over the set $X_0 \subseteq X$ of height $L+1$ such that 
\[
\core(\calU), \core(\calV) \subseteq X_0,
\]
\[
\rad(\calU), \rad(\calV) \geq s_1
\]
and
\[
U_1(x) \subseteq V_1(x) \subseteq U_2(x) \subseteq V_2(x) \subseteq \dots \subseteq
U_{L+1}(x) \subseteq V_{L+1}(x)
\]
for all points $x \in X_0$. Let
\[
\Delta:=\max(144q,4).
\]
Suppose that
\begin{equation}
\label{eq.vugrowth}
U_i^{\left( 1+ \Delta / \varepsilon \right)}(x) \subseteq V_i(x)
\end{equation}
for all indexes $i=1,\dots,L+1$ and all points $x \in X_0$. It is clear
that, since the towers are centered, this implies that the towers $\calU,\calV$ are $\left(
  1+\frac{\Delta}{\varepsilon}\right)$-expanding.

\begin{lemma}
\label{l.fill}
Suppose that $L \geq 1+ 2 \log_C \frac{2}{\varepsilon}$. Then either
there exists a ball $W \in \calV_{L+1}$ which can be $4 \varepsilon$-filled
by a disjoint subcollection of $\calU$, or, otherwise, the inequality 
\begin{equation}
\label{eq.epsfill}
\mu \left( \bigcup\limits_{V \in \calV} V \right) \geq \left( 1
  + \frac{\varepsilon}{3^{q+1}}\right) \mu \left( \bigcup\limits_{U \in
    \calU_1} U \right)
\end{equation}
holds.
\end{lemma}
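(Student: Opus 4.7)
The plan is a dichotomy: I assume that option~(a) fails, i.e.\ no $W \in \calV_{L+1}$ admits a $4\varepsilon$-filling by a disjoint subcollection of $\calU$, and derive~\eqref{eq.epsfill}. Note first that the nesting $V_i(x) \subseteq V_{i+1}(x)$ gives $\bigcup_{V \in \calV}V = \bigcup_{V \in \calV_{L+1}}V$. I would apply Lemma~\ref{l.vitcov} to $\calV_{L+1}$ (using $\rad(\calV_{L+1}) \geq s_1 \geq s'$ and $\core(\calV) \subseteq X_0$) to extract a disjoint subfamily $\calD \subseteq \calV_{L+1}$ with $\mu\bigl(\bigcup_{W \in \calD}W\bigr) \geq 3^{-(q+1)}\mu\bigl(\bigcup_{V \in \calV_{L+1}}V\bigr)$.

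The main step is then to show that for each $W = V_{L+1}(x) \in \calD$ one has $\mu\bigl((\bigcup_{U \in \calU_1}U)\cap W\bigr) \leq (1-2\varepsilon)\mu(W)$. Fix such a $W$ and set $m := \lceil(L+1)/2\rceil$; the hypothesis $L \geq 1 + 2\log_C(2/\varepsilon)$ is tailored so that both $m \geq 1+\log_C(2/\varepsilon)$ and $L+2-m \geq 1+\log_C(2/\varepsilon)$. Introduce
\[
C_W := \{y \in X_0 : U_m(y) \subseteq W\},\qquad \calU^{W} := \{U_i(y) : y \in C_W,\ 1 \leq i \leq m\},
\]
a sub-tower of $\calU$ of height $m$ that inherits the expansion factor $1+\Delta/\varepsilon$, with core in $X_0$ and radii $\geq s_1 \geq s_0$. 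Theorem~\ref{t.evc} with parameter $\varepsilon$ yields a disjoint $\calA^W \subseteq \calU^W$ with $\mu(\bigcup_{U\in\calA^W}U) \geq (1-\varepsilon)\mu(\bigcup_{y\in C_W}U_1(y))$; by the definition of $C_W$ every ball in $\calA^W$ sits inside $W$, and the failure of~(a) forces $\mu(\bigcup_{U\in\calA^W}U) \leq (1-4\varepsilon)\mu(W)$. Combining these two estimates gives $\mu(\bigcup_{y\in C_W}U_1(y)) \leq \tfrac{1-4\varepsilon}{1-\varepsilon}\mu(W) \leq (1-3\varepsilon)\mu(W)$. The additional contribution to $(\bigcup_{U \in \calU_1} U) \cap W$---coming from $U_1(z)$ with $z \in W \setminus C_W$ or with $z$ just outside $W$ within distance $\rad(U_1)$ of $\partial W$---is confined to a shell of relative width at most $\rad(U_m)/\rad(W) \leq (1+\Delta/\varepsilon)^{-(L+2-m)}$, which is $\leq \varepsilon$ by the choice of $m$ and the expansion $\Delta \geq 144q$, so Lemma~\ref{l.smallbdr} bounds its measure by $\varepsilon\,\mu(W)$; adding up yields the claim.

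Summing the inequality $\mu\bigl((\bigcup_{U\in\calU_1}U)\cap W\bigr) \leq (1-2\varepsilon)\mu(W)$ over the disjoint family $\calD$ and using the Vitali bound of the first paragraph together with the trivial inclusion $\bigcup_{U \in \calU_1}U \subseteq \bigcup_{V \in \calV}V$ gives
\[
\mu\Bigl(\bigcup_{V\in\calV}V \,\setminus\, \bigcup_{U\in\calU_1}U\Bigr) \geq 2\varepsilon\,\mu\Bigl(\bigcup_{W\in\calD}W\Bigr) \geq \tfrac{2\varepsilon}{3^{q+1}}\,\mu\Bigl(\bigcup_{V\in\calV}V\Bigr),
\]
which rearranges to~\eqref{eq.epsfill} with a factor-of-two slack. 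The hard part will be the boundary bookkeeping in the second paragraph: quantifying precisely the contribution to $\bigcup_{U\in\calU_1}U$ from centers in the $\rad(U_m)$-neighborhood of $\partial W$ and absorbing it into an $\varepsilon$-fraction of $\mu(W)$. This boundary control is exactly what forces the tight expansion $\Delta \geq 144q$ and the doubled-height hypothesis $L \geq 1+2\log_C(2/\varepsilon)$ in the statement.
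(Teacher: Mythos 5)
Your high-level strategy is sound and matches the paper in spirit (Vitali on the top $\calV$-level, an application of the effective Vitali covering theorem to a half-height subtower, then a boundary estimate), but the proposal has two concrete gaps, one of them at exactly the step you flag as "the hard part."

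First, you apply Lemma~\ref{l.vitcov} to $\calV_{L+1}$ rather than to the collection of \emph{maximal} balls $[\calV_{L+1}]$, as the paper does. This is not cosmetic: the entire boundary argument hinges on $W$ being maximal, since it is maximality that lets one invoke Lemma~\ref{l.maxballs} to conclude that any lower-level ball intersecting $W$ must lie inside $W^{\varepsilon}$. Without maximality, a much larger ball $V_{L+1}(y) \supsetneq W$ could exist, and its small descendant $U_1(y)$ could sit anywhere in $W$ without being confined to any shell.

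Second, the inequality
$\rad(U_m)/\rad(W) \leq (1+\Delta/\varepsilon)^{-(L+2-m)}$
is not established and in fact does not make sense as written: $\rad(U_m(z))$ varies with $z$, and the hypotheses give no a priori upper bound on it in terms of $\rad(W)$ --- the balls $U_m(z)$ for $z$ near $\partial W$ could a priori be as large as one likes. The relation $U_m(z) \subseteq V_m(z) \subseteq \cdots \subseteq V_{L+1}(z)$ compares $U_m(z)$ only to $V_{L+1}(z)$, \emph{not} to $W = V_{L+1}(x_0)$, which is a different ball centered at a different point. To get the needed control one must combine: (i) maximality of $W$; (ii) Lemma~\ref{l.maxballs} applied to the interleaved $(1+4/\varepsilon)$-expanding tower $U_1 \subseteq V_1 \subseteq U_2 \subseteq \cdots$, which bounds $V_L(z)$ (and hence $U_m(z)$) by a $W^\varepsilon$-sized ball whenever it meets $W$; and (iii) Lemma~\ref{l.distpoint}, needed because in a general metric measure space a ball of large radius parameter is not automatically large as a set. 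The paper packages all of this into a contradiction argument: it locates a point $z_1 \in \kint{\varepsilon/4q}{W}$ deep inside $W$ and a point $z_2$ of $U_L(z)$ outside $W$; both lie in $U_L(z)$, which forces $\rad(U_L(z)) \gtrsim \varepsilon\,\rad(W)/q$ and hence $\rad(V_L(z)) \geq 6\,\rad(W)$ by the expansion factor $\Delta \geq 144q$; but $V_L(z)$ meets $W$, so Lemma~\ref{l.maxballs} gives $V_L(z) \subseteq W^\varepsilon$, and Lemma~\ref{l.distpoint} shows these two facts are incompatible. Your sketch neither sets up $z_1,z_2$ nor invokes the $\calU/\calV$ interleaving or Lemma~\ref{l.distpoint}, and the asserted shell bound does not substitute for this argument.

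A smaller structural remark: the paper does not try to estimate $(\bigcup_{U\in\calU_1}U)\cap W$ for \emph{every} Vitali ball. It sets up a clean dichotomy at level $\lfloor L/2\rfloor$: either \emph{every} $W$ in the Vitali family leaves an $\varepsilon$-fraction uncovered by $\bigcup_{U\in\calU_{\lfloor L/2\rfloor}}U$, in which case \eqref{eq.epsfill} follows directly (this is the analogue of your summing step), or \emph{some} $W$ is $(1-\varepsilon)$-covered at that level, in which case the contradiction argument plus Theorem~\ref{t.evc} applied to the \emph{upper} half-tower $\calZ_{\geq\lfloor L/2\rfloor}$ yields a $4\varepsilon$-filling of that $W$. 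This avoids having to prove your per-ball claim $\mu((\bigcup U_1)\cap W) \leq (1-2\varepsilon)\mu(W)$ for all $W$, which is what forces you into the problematic uniform shell estimate in the first place.
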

\begin{proof}
First of all, we apply Lemma \ref{l.vitcov} to the collection
$[\calV_{L+1}]$, obtaining a collection of maximal disjoint balls $\calW \subseteq [
\calV_{L+1} ]$ such
that
\[
\mu\left( \bigcup\limits_{ W \in \calW} W \right) \geq \frac{1}{3^{q+1}}
\mu\left( \bigcup\limits_{V \in [\calV]} V \right) \geq
\frac{1}{3^{q+1}} \mu \left( \bigcup\limits_{U \in \calU_1} U\right). 
\]
There are two further possibilities. First, suppose that for every $W \in
\calW$ we have
\[
\mu\left( W \setminus \bigcup\limits_{U \in \calU_{\lfloor L/2 \rfloor}} U \right) \geq
\varepsilon \mu \left( W \right).
\]
Let $D:= \bigcup\limits_{U \in \calU_{ \lfloor L/2 \rfloor }} U$. Then
$\bigcup\limits_{U \in \calU_1} U \subseteq D$, and so
\begin{align*}
\mu\left( \bigcup\limits_{V \in \calV} V \right) &\geq \mu \left(
  \bigcup\limits_{U \in \calU_1} U \right) + \mu \left(
  \bigcup\limits_{W \in \calW} W \setminus D \right) = \\
&= \mu \left( \bigcup\limits_{U \in \calU_1} U\right) +
  \sum\limits_{W \in \calW} \mu \left( W \setminus D \right) \geq \\
&\geq \left( 1+ \frac{\varepsilon}{3^{q+1}} \right) \mu \left(
  \bigcup\limits_{U \in \calU_1} U\right).
\end{align*}
This is precisely inequality \eqref{eq.epsfill} in the statement of the
lemma. Otherwise, suppose that the ball $W \in \calW$ is such that
\[
\mu\left( W \setminus \bigcup\limits_{U \in \calU_{ \lfloor L/2 \rfloor }} U \right) <
\varepsilon \mu \left( W \right).
\]
We intend to prove that $W$ can be $4 \varepsilon$-filled by a disjoint
subcollection of $\calU$. Define the
collections of balls
\[
\calY:= \{ U_i(x): 1 \leq i \leq \lfloor L/2 \rfloor , x \in X_0
\text{ are such that }
U_{\lfloor L/2 \rfloor }(x) \cap W \neq \varnothing \}
\]
and
\[
\calZ:= \{ U_i(x): 1 \leq i \leq L, x \in X_0 \text{ are such that }
U_L(x) \subseteq W \}.
\]
Observe that $\calZ$ is in fact a \emph{tower}. Furthermore, 
\begin{equation}
\label{eq.ycond}
\mu\left( W \cap \bigcup\limits_{Y \in \calY} Y \right) \geq (1- \varepsilon) \mu \left( W \right).
\end{equation}
We will show that
\begin{equation}
\label{eq.zfillg}
\mu\left( W \cap \bigcup\limits_{Z \in \calZ_{\leq \lfloor L/2 \rfloor
    }} Z \right)
\geq (1- 3 \varepsilon) \mu \left( W \right).
\end{equation}
Assume that this does not hold. Lemma \ref{l.smallbdr} implies that
\[
\mu(\kint{\varepsilon/4q}{W}) \geq (1-\varepsilon) \mu(W).
\]
Then there exists a subset $W_0 \subseteq \kint{\varepsilon/4q}{W}$
such that 
\[
W_0 \cap \bigcup\limits_{Z \in \calZ_{\leq \lfloor L/2 \rfloor
    }} Z = \varnothing
\]
and $\mu(W_0) \geq
2 \varepsilon \mu(W)$. Condition \eqref{eq.ycond} implies that there
exists a ball $U_i(x) \in \calY \setminus \calZ_{\leq \lfloor L/2
  \rfloor }$ such that
$U_i(x) \cap W_0 \neq \varnothing$. Let $z_1 \in U_i(x) \cap W_0$. Since $U_i(x)
\notin \calZ_{\leq \lfloor L/2 \rfloor }$, we know that $U_L(x) \cap W^c \neq
\varnothing$. Let $z_2 \in U_L(x) \cap W^c$, then, clearly,
$z_1,z_2 \in U_L(x)$.

Suppose for the moment that $W = \bl(y,r)$ for some $y \in X_0$
and some $r > 0$. Since $W_0 \subseteq \kint{\varepsilon/4q}{W}$, we have
\[
d(z_1,z_2) \geq \frac{\varepsilon r}{4 q},
\]
and hence the radius of the ball $U_L(x)$ is greater or equal
than $\frac{\varepsilon r}{8 q}$. If $V_L(x) = \bl(x,r')$ for some $r'>0$, then, since $U_L^{\left( 1+ 144
    q / \varepsilon \right)}(x) \subseteq V_L(x)$, we conclude using
Lemma \ref{l.distpoint} that
\[
r' \geq \left( 2+\frac{144 q}{\varepsilon}\right)\cdot \frac{1}{3}
\cdot \frac{\varepsilon r}{8 q} \geq 6 r.
\]
The intersection $V_L(x) \cap W$ is nonempty, since it contains point
$z_1$. The tower $\calV$ is $(1+\frac{4}{\varepsilon})$-expanding as
well, hence $V_L(x) \subseteq W^{\varepsilon}$ due to Lemma \ref{l.maxballs}. This contradicts to
the estimate $r' \geq 6r$ above. Thus we have proved the estimate \eqref{eq.zfillg}.

Now consider the tower $\widetilde \calZ := \calZ_{\geq \lfloor L/2
  \rfloor}$, which has the height of at least $1+\log_C\frac{2}{\varepsilon}$. Due to the
estimate \eqref{eq.zfillg}, the first level balls of the tower
$\widetilde \calZ$ cover at least a $(1-3 \varepsilon)$-fraction of
$W$. We apply Theorem \ref{t.evc} to the tower $\widetilde \calZ$ and
obtain a disjoint collection of balls $\calZ'$, which are all contained in $W$, such that
\[
\mu\left( \bigcup\limits_{Z \in \calZ'} Z\right) \geq (1 -
\varepsilon) \mu\left( \bigcup\limits_{Z \in \widetilde \calZ_1}
  Z\right) \geq (1-4\varepsilon) \mu(W).
\]
This shows that the ball $W$ can be $4 \varepsilon$-filled by a
disjoint collection of balls from $\calU$, and the proof is complete.
\end{proof}

For the convenience of the reader we recollect all assumptions in the
statement of the following main theorem of the section. The proof is
a minor adaptation of the one in \cite{hochman2009}. We remind the reader that the constants $C,\Delta$ are defined as
\[
C:=\frac{3^{q+1}}{3^{q+1}-1}, \quad \Delta:=\max(144q,4),
\]
and we define a new constant $K:=\lceil 2
\log_C{\frac{4}{\varepsilon}} \rceil$.
\begin{thm}
\label{t.expgr}
Let $(\prX, \mu)$ be a metric measure space of exact polynomial growth
of degree $q \in \Rps$ and let $X_0 \subseteq X$ be a bounded
set. Then for all $\varepsilon>0$ small enough there exists
$s_1:=s_1(X_0,\varepsilon)$ such that the following holds. Let $L \geq
2 \log_C(4/\varepsilon)$ be an arbitrary integer. Assume that $\calU$ and $\calV$
are centered towers over $X_0$ of height $L+1$
such that 
\[
\core(\calU), \core(\calV) \subseteq X_0,
\]
\[
\rad(\calU), \rad(\calV) \geq s_1
\]
and
\[
U_1(x) \subseteq V_1(x) \subseteq U_2(x) \subseteq V_2(x) \subseteq \dots \subseteq
U_{L+1}(x) \subseteq V_{L+1}(x) \quad \text{ for all } x\in X_0.
\]
Suppose that
\begin{equation}
\label{eq.vugrowth}
U_i^{\left( 1+ \Delta / \varepsilon \right)}(x) \subseteq V_i(x) \quad
\text{ for all } x\in X_0, i=1,\dots,L+1
\end{equation}
and that no ball $V \in \calV$ can be $4\varepsilon$-filled by a
disjoint subcollection of $\calU$. Then
\[
\mu \left( \bigcup\limits_{U \in \calU_1} U\right) \leq \left(
  1+\frac{\varepsilon}{3^{q+1}}\right)^{-\lfloor L/ K  \rfloor} \mu \left( \bigcup\limits_{V \in
    \calV_{L+1}} V\right).
\]
\end{thm}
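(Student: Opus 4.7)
The plan is to apply Lemma \ref{l.fill} iteratively to a sequence of sub-towers of $(\calU, \calV)$, picking up a multiplicative factor $1 + \varepsilon/3^{q+1}$ at each step. Set $J := \lfloor L/K \rfloor$. For $j = 1, \dots, J$ I will take the pair $(\calU^{(j)}, \calV^{(j)})$ consisting of the $K$ consecutive levels $(j-1)K+1, (j-1)K+2, \dots, jK$ of $\calU$ and $\calV$ respectively. Each such sub-tower pair inherits the hypotheses of the theorem---centering, the radius bound, the core condition, the interleaving $U_i \subseteq V_i \subseteq U_{i+1}$, and the expansion condition \eqref{eq.vugrowth}---and, crucially, has height $K$.

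Because $C = 3^{q+1}/(3^{q+1}-1) < 2$ one has $\log_C 2 \geq 1$, so the sub-tower height $K \geq 2\log_C(4/\varepsilon) = 2\log_C 2 + 2\log_C(2/\varepsilon)$ is at least $2 + 2\log_C(2/\varepsilon)$, which is the minimum height required by Lemma \ref{l.fill}. The global assumption that no ball of $\calV$ can be $4\varepsilon$-filled by a disjoint subcollection of $\calU$ restricts to the same assertion for $\calV^{(j)} \subseteq \calV$ and $\calU^{(j)} \subseteq \calU$ (the latter restriction being only stronger), so the first alternative of Lemma \ref{l.fill} fails for every $j$ and the second must hold. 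Writing $\alpha_j := \mu(\bigcup_x U_{jK+1}(x))$ and observing via the tower property that $\bigcup \calV^{(j)} = \bigcup_x V_{jK}(x)$ and $\bigcup \calU^{(j)}_1 = \bigcup_x U_{(j-1)K+1}(x)$, Lemma \ref{l.fill} yields $\mu(\bigcup_x V_{jK}(x)) \geq (1 + \varepsilon/3^{q+1}) \alpha_{j-1}$.

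The chain is then closed by the single interleaving inclusion $V_{jK}(x) \subseteq U_{jK+1}(x)$, which gives $\mu(\bigcup_x V_{jK}(x)) \leq \alpha_j$ and hence the recursion $\alpha_j \geq (1 + \varepsilon/3^{q+1}) \alpha_{j-1}$ for $j = 1, \dots, J$. Iterating produces $\alpha_J \geq (1 + \varepsilon/3^{q+1})^J \alpha_0$. I will conclude by identifying $\alpha_0 = \mu(\bigcup \calU_1)$ and, using $JK + 1 \leq L+1$ to see $U_{JK+1}(x) \subseteq V_{L+1}(x)$, noting $\alpha_J \leq \mu(\bigcup \calV_{L+1})$; rearrangement then delivers the stated bound.

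The main conceptual care needed is in the choice of slicing. A naive decomposition into sub-towers of height $K+1$ whose top level coincides with the bottom of the next forces the chain to advance only every other step, losing a factor of two in the exponent. Taking disjoint sub-towers of height exactly $K$---feasible only because $C < 2$---and bookkeeping on the $\calU$-side measures $\alpha_j$ via the single interleaving step $V_{jK}(x) \subseteq U_{jK+1}(x)$ is what recovers a per-step gain and yields the exponent $\lfloor L/K \rfloor$.
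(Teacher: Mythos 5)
Your proof is correct and is essentially the paper's argument, recast as a forward iteration over disjoint height-$K$ blocks rather than the paper's backward induction that peels $K$ levels off the top at each step. The substance --- applying Lemma \ref{l.fill} to each block (valid since $K-1 \geq 1+2\log_C(2/\varepsilon)$, using $C<2$), chaining via the interleaving inclusion $V_{jK}(x) \subseteq U_{jK+1}(x)$, and closing with $U_{JK+1}(x) \subseteq V_{L+1}(x)$ --- is the same.
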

\begin{proof}
We assume that $0<\varepsilon<\varepsilon_0$ (with $\varepsilon_0$
determined in Theorem \ref{t.evc}). Consider the case $L:=K$ first. Then the statement of
the theorem is precisely Lemma \ref{l.fill}, since $\lceil 2
\log_C{\frac{4}{\varepsilon}} \rceil \geq 1+2\log_C{\frac{2}{\varepsilon}}$. In general, we suppose
that $iK \leq L < (i+1)K$ for some uniquely determined $i \in \N$
and proceed by induction on $i$. So suppose that we have proved the
statement for $i \geq 1$ and want to proceed to $i+1$. Let $\calU,\calV$ be
two arbitrary towers of height $n$ such that $1+(i+1)K \leq n < 1+(i+2)K$, satisfying the assertions of the theorem. Then
\[
\widetilde \calU:=\calU_{\leq n-K}, \quad \widetilde
\calV:=\calV_{\leq n- K}
\]
are two towers of height $n- K$, satisfying the assertions of the
theorem. The inductive assumption implies that
\[
\mu \left( \bigcup\limits_{U \in \widetilde \calU_1} U\right) \leq \left(
  1+\frac{\varepsilon}{3^{q+1}}\right)^{-i} \mu \left( \bigcup\limits_{V \in
    \widetilde \calV_{n-K}} V\right) \leq \left(
  1+\frac{\varepsilon}{3^{q+1}}\right)^{-i} \mu \left( \bigcup\limits_{U \in
    \calU_{n-K+1}} U\right).
\]
Now observe that the towers $\calV':=\calV_{\geq n- K+1},
\calU':=\calU_{\geq n - K+1}$ of height $K$ satisfy the assertions of
the theorem as well. Thus we can apply the base case of the induction,
and so
\[
\mu \left( \bigcup\limits_{U \in
    \calU_{n-K+1}} U\right) \leq \left(
  1+\frac{\varepsilon}{3^{q+1}}\right)^{-1} \mu \left( \bigcup\limits_{V \in
     \calV_{n}} V\right). 
\]
Combining this with the the previous inequality, we deduce that
\[
\mu \left( \bigcup\limits_{U \in \calU_1} U\right) \leq \left(
  1+\frac{\varepsilon}{3^{q+1}}\right)^{-i-1} \mu \left( \bigcup\limits_{V \in
     \calV_{n}} V\right),
\]
and the proof is complete.
\end{proof}
\begin{rem}
\label{r.smallbdrunexgr4}
Similar to the previous remarks, we observe that for metric measure spaces of
\emph{uniformly} exact polynomial growth the assumption in Theorem
\ref{t.expgr} that
\[
\core(\calU),\core(\calV) \subseteq X_0
\]
for a bounded set $X_0$ can be dropped. In this case the parameter
$s_1$ depends on $\varepsilon$ and the space $(\prX,\mu)$ only.
\end{rem}

\section{Hochman's and Kingman's Theorems}
\label{s.upcr}
\subsection{Hochman's Upcrossing Theorem}
\label{ss.hochman}
The goal of this section is to prove the following theorem, which we
stated in the introduction.
\begin{thm}
\label{t.hutp}
Let $\Gamma$ be a group of polynomial growth. For all intervals
$(\alpha,\beta) \subset \R$ and all $\delta>0$ small enough there
exist constants $c>0$ and $\rho \in (0,1)$ such that the following
holds. If $(S_{\bl(g,r)})$ is a stationary process, then for
all $k \geq 1$
\begin{align*}
&\bbP(\{ x: (S_{\bl(i)}(x))_{i \geq 1} \in \flc{\alpha}{\beta}{k}  \})
  \leq \\
 &\leq c \rho^k + \bbP \left(\left\{ x: 
\begin{array}{c}
\exists n>k \text{ s.t. }
  S_{\bl(n)}(x)>\beta \text{ and } \bl(n) \text{ can be } \delta-\text{filled}\\
 \text{by disjoint balls } V_1,\dots,V_m \text{ s.t. } \forall i \
  S_{V_i}(x)<\alpha
\end{array} \right\} \right).
\end{align*}
\end{thm}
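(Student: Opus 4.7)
The plan is to follow Hochman's approach from \cite{hochman2009}, with the tower sandwich Theorem \ref{t.expgr} playing the central role previously played by the analogous lemma for $\mathbb{N}$. By Pansu's formula \eqref{eq.pansu}, the group $\Gamma$ equipped with the counting measure and the word metric is a metric measure space of uniformly exact polynomial volume growth of degree $d$, so the covering machinery of Section \ref{s.covlem} is available in the uniform variants noted in Remarks \ref{r.smallbdrunexgr3} and \ref{r.smallbdrunexgr4}.

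Fix $\varepsilon > 0$ small and set $\delta := 4\varepsilon$. Let $A_k$ and $B_k$ denote the events on the left- and right-hand sides of the claimed inequality. The first step is scale sparsification: given $x \in A_k$ with upcrossings at scales $i_1 < j_1 < \cdots < i_k < j_k$, a pigeonhole argument over dyadic blocks of ratio $1 + \Delta/\varepsilon$ extracts a geometrically spaced upcrossing subsequence of length $L+1$ with $L \geq c_0 k$ for some $c_0 = c_0(\varepsilon) > 0$, at the cost of decomposing $A_k$ into a family of sub-events indexed by the chosen scale tuple $(r_1, s_1, \dots, r_{L+1}, s_{L+1})$.

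For a fixed tuple, I would set up the argument as follows. Fix a large window $W := \bl(N) \subset \Gamma$ and, for each sample $x$, let $X_0(x)$ be the set of $g \in W$ such that $S_{\bl(g, r_i)}(x) < \alpha$ and $S_{\bl(g, s_i)}(x) > \beta$ for every $i$. Form the centered towers
\[
\calU(x) := \{\bl(g, r_i) : g \in X_0(x),\ 1 \leq i \leq L+1\}, \quad \calV(x) := \{\bl(g, s_i) : g \in X_0(x),\ 1 \leq i \leq L+1\},
\]
which satisfy the expansion and interlacing hypotheses of Theorem \ref{t.expgr} by the geometric spacing. The dichotomy of that theorem applies pointwise in $x$: either some $V = \bl(g, s_{L+1}) \in \calV(x)$ admits a $4\varepsilon$-filling by disjoint balls of $\calU(x)$, in which case left-translation by $g^{-1}$ (an isometry of $(\Gamma, d)$ by right-invariance) yields a $\delta$-fill of $\bl(s_{L+1})$ by disjoint balls on which $S < \alpha$ while $S_{\bl(s_{L+1})} > \beta$, placing $x$ in $B_k$ since $s_{L+1} > k$; or else
\[
\mu\!\left(\bigcup_{U \in \calU_1(x)} U\right) \leq \left(1 + \frac{\varepsilon}{3^{q+1}}\right)^{-\lfloor L/K \rfloor} \mu(W^+),
\]
where $W^+$ is the slightly enlarged window containing $\bigcup \calV_{L+1}(x)$. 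Since $\bigcup \calU_1(x)$ contains a ball around each point of $X_0(x)$, this bounds $\mu(X_0(x))$ by $\rho^L \mu(W)$ for a suitable $\rho \in (0,1)$ (up to constants), and Fubini combined with stationarity, which gives $\bbE[\mu(X_0(x))] = \mu(W) \cdot \bbP(\text{pattern at } e)$, converts this into the desired exponential probability bound for a single tuple.

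The main obstacle I anticipate is the combinatorial accounting in the scale-selection step: each $x \in A_k$ may require a different tuple, so a union bound over candidate tuples is needed. The geometric spacing forces the number of effectively distinct tuples to grow only polynomially in $k$ (and in the ambient depth at which the upcrossings live, which can be truncated at a polynomial in $k$ without loss), and this polynomial factor can be absorbed into the exponential $\rho^k$ by enlarging $\rho$ slightly. A secondary subtlety is that stationarity is asserted only in distribution, so all manipulations must be performed at the level of expectations via Fubini rather than via pathwise translations of configurations.
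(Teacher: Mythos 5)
Your scale-sparsification step is the place where the proposal breaks down. You claim that, given $x \in A_k$ with $2k$ upcrossing scales, a pigeonhole over geometric blocks of ratio $1+\Delta/\varepsilon$ extracts a geometrically spaced subsequence of length $L+1 \geq c_0 k + 1$. This is false for an arbitrary upcrossing sequence: all $2k$ scales may lie inside a single geometric block, in which case the pigeonhole yields a subsequence of length one regardless of $k$. There is no a priori lower bound on how many geometric blocks the upcrossing scales span, and so the combinatorial accounting you flag as your ``main obstacle'' cannot even get started -- the decomposition into sub-events indexed by tuples is not exhaustive in the way you need, and the unresolved truncation of the depth $l$ to a polynomial in $k$ compounds the problem.

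What you are missing is that the geometric separation is not something to be imposed by pigeonhole; it is \emph{forced} by the non-filling condition, and this is the actual crux of the argument. The paper works with the set $E_{M,l,x}^k$ of $g\in\bl(M)$ lying in $Q_{g,l}^k\setminus R_g^k$, and for such $g$ the balls $U_i(g)\subsetneq V_i(g)$ witnessing the $i$-th upcrossing must satisfy $\cntm{V_i(g)}/\cntm{U_i(g)} > 1+\delta$: otherwise $U_i(g)$ alone $\delta$-fills $V_i(g)$, and since $S_{U_i(g)}(x)<\alpha$, $S_{V_i(g)}(x)>\beta$ and the radius of $V_i(g)$ exceeds $k$, we would have $x\in R_g^k$. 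Lemma~\ref{l.measvsrad} then converts the measure ratio into a radius ratio bounded below by a constant $>1$, and skipping a fixed number $D=D(q,\delta)$ of upcrossings produces exactly the $(1+\Delta/\varepsilon)$-expansion that Theorem~\ref{t.expgr} requires. Crucially, the tower definition in Section~\ref{s.covlem} permits the radii $U_i(g),V_i(g)$ to vary with the base point $g$, so there is no tuple to fix, no union bound to take, and no depth truncation to justify; the transference (Lemma~\ref{l.caldtrans}) then converts the resulting density bound $\cntm{E_{M,l,x}^k}\leq c\rho^k\cntm{\bl(M)}+o_{x,l}(\cntm{\bl(M)})$ into the probability inequality directly. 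As a minor further point, your dichotomy step should invoke right-translation by $g^{-1}$ (not left), since $d=d_R$ is right-invariant and stationarity is under right translations.
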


\noindent We proceed to the proof. Without loss of generality we
assume that $\Gamma$ is group of polynomial growth of degree $q \geq
1$. For all $k,l \in \N$ and all $g \in \Gamma$ we define the events
\[
Q_{g,l}^k:=\{ x: (S_{\bl(g,i)}(x))_{i = 1}^l \in \flc{\alpha}{\beta}{k}  \}
\]
and
\[
R_{g}^k:= \left\{ x: 
\begin{array}{c}
\exists n>k \text{ s.t. }
  S_{\bl(g,n)}(x)>\beta \text{ and } \bl(g,n) \text{ can be } \delta-\text{filled}\\
 \text{by disjoint balls } V_1,\dots,V_m \text{ s.t. } \forall i \
  S_{V_i}(x)<\alpha
\end{array} \right\}.
\]
Let $Q_g^k:=\bigcup\limits_{l \geq 1} Q_{g,l}^k$ be the increasing
union of the events $Q_{g,l}^k$ for all $l \geq 1$. The goal is to show existence of universal constants
$c>0,\rho \in (0,1)$ such that that
\[
\bbP(Q_{\ue}^k) \leq c \rho^k + \bbP(R_{\ue}^k) \quad \text{ for all } k \geq 1,
\]
and it is clear that to do so it suffices to prove that for all $l$ we have
\[
\bbP(Q_{\ue,l}^k \setminus R_{\ue}^k ) \leq c \rho^k \quad \text{ for all } k \geq 1.
\]

The main idea of the proof is to use a `transference principle'. For integers $l,M \in \N$ and a point $x \in
X$ we define the set
\[
E_{M,l,x}^k:= \{ g: \ x \in Q_{g,l}^k \setminus R_{g}^k \text{  and  } \| g \| \leq M \} \subseteq \bl(M).
\]
The lemma below tells us essentially that each universal upper bound on the
density of $E_{M,l,x}^k$ in $\bl(M)$ bounds the probability of $Q_{\ue,l}^k \setminus R_{\ue}^k$ from
above as well.
\begin{lemma}[`Transference principle']
\label{l.caldtrans}
Suppose that for a given constant $t \in \Rp$ the following holds:
there is some $M_0 \in \N$ such that for all $M \geq M_0, k \geq 1$ and for $\bbP$-almost all $x \in
X$ we have
\begin{equation}
\label{eq.transf}
 \cntm{E_{M,l,x}^k} \leq t \cntm{\bl(M)} + o_{x,l}(\cntm{\bl(M)}),
\end{equation}
where $o_{x,l}(\cntm{\bl(M)}) / \cntm{\bl(M)}$ converges to $0$ uniformly in $x$. Then
\[
\mu(Q_{\ue,l}^k \setminus R_{\ue}^k) \leq t
\]
for all $k \geq 1$.
\end{lemma}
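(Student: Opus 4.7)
The plan is to run a standard transference / ergodic-style averaging argument, using stationarity to convert a pointwise density bound on $E_{M,l,x}^k \subseteq \bl(M)$ into a probability bound on $Q_{\ue,l}^k \setminus R_\ue^k$. The key observation is that the cardinality of $E_{M,l,x}^k$ can be expressed as a sum of indicators of the events $Q_{g,l}^k \setminus R_g^k$ over $g \in \bl(M)$, and stationarity ensures that each of these events has the same probability as $Q_{\ue,l}^k \setminus R_\ue^k$.

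More concretely, I would first write
\[
\cntm{E_{M,l,x}^k} = \sum_{g \in \bl(M)} \indi{Q_{g,l}^k \setminus R_g^k}(x)
\]
for every $x \in X$ and every $M \in \N$. By the hypothesis \eqref{eq.transf}, for $\bbP$-almost every $x$ and all $M \geq M_0$ we have
\[
\sum_{g \in \bl(M)} \indi{Q_{g,l}^k \setminus R_g^k}(x) \leq t \cntm{\bl(M)} + o_{x,l}(\cntm{\bl(M)}),
\]
with the error term satisfying $o_{x,l}(\cntm{\bl(M)}) / \cntm{\bl(M)} \to 0$ uniformly in $x$.

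Next I would integrate both sides with respect to $\bbP$; since the summands are nonnegative, Tonelli's theorem is applicable and yields
\[
\sum_{g \in \bl(M)} \bbP(Q_{g,l}^k \setminus R_g^k) \leq t \cntm{\bl(M)} + \varepsilon(M) \cntm{\bl(M)},
\]
where $\varepsilon(M) \to 0$ as $M \to \infty$ thanks to the uniformity of the error term (which lets us pull the $\sup_x$ out of the integral or, equivalently, apply dominated convergence after dividing through by $\cntm{\bl(M)}$).

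Finally, stationarity of the process $(S_{\bl(g,r)})$ implies that the distribution of the family $(S_{\bl(g,r)})_r$ does not depend on $g$, so the events $Q_{g,l}^k$ and $R_g^k$ have the same probability as $Q_{\ue,l}^k$ and $R_\ue^k$, respectively; hence
\[
\bbP(Q_{g,l}^k \setminus R_g^k) = \bbP(Q_{\ue,l}^k \setminus R_\ue^k) \quad \text{for every } g \in \Gamma.
\]
Substituting this into the previous inequality, dividing by $\cntm{\bl(M)}$, and letting $M \to \infty$ yields the desired bound $\bbP(Q_{\ue,l}^k \setminus R_\ue^k) \leq t$. There is no real obstacle in this argument; the only point that needs mild care is verifying that the uniform $o(\cdot)$ assumption indeed survives integration, but this is immediate from the uniformity in $x$.
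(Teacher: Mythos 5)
Your proof is correct and follows essentially the same argument as the paper's: expressing $\cntm{E_{M,l,x}^k}$ as a sum of indicators of $Q_{g,l}^k \setminus R_g^k$ over $g \in \bl(M)$, swapping sum and integral (Tonelli), invoking stationarity to equate each $\bbP(Q_{g,l}^k \setminus R_g^k)$ with $\bbP(Q_{\ue,l}^k \setminus R_\ue^k)$, and then dividing by $\cntm{\bl(M)}$ and letting $M \to \infty$. Your explicit remark that the uniformity of the $o_{x,l}(\cdot)$ term is what lets the error vanish after integration is a welcome clarification that the paper leaves implicit.
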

\begin{proof}
Indeed, since the process is stationary, we have
\[
\sum\limits_{g \in \bl(M)} \int\limits_{\prX} \indi{Q_{g,l}^k \setminus R_{g}^k}(x) d \mu = \cntm{\bl(M)} \mu(Q_{\ue,l}^k \setminus R_{\ue}^k)
\]
for all $M \geq 1$. Then
\begin{align*}
\mu(Q_{\ue,l}^k \setminus R_{\ue}^k) &= \int\limits_{\prX} \left( \frac 1 {\cntm{\bl(M)}} \sum\limits_{g \in
      \bl(M)} \indi{Q_{g,l}^k \setminus R_{g}^k} (x) \right) d \mu \leq \\
&\leq \int\limits_{\prX} \left( \frac{\cntm{E_{M,l,x}}}{\cntm{\bl(L)}}
  \right) d \mu,
\end{align*}
and the proof is complete since $M$ can be arbitrarily large.
\end{proof}

The goal now is to derive estimate \eqref{eq.transf} such that the
assertions of Lemma \ref{l.caldtrans} hold, i.e., to prove
that for some universal constants $c > 0, \rho \in (0,1)$ we have that 
\[
\cntm{E_{M,l,x}^k} \leq c \rho^k \cntm{\bl(M)} + o_{x,l}(\cntm{\bl(M)})
\]
holds for all $M$ large enough, all $k$ and almost all $x$. Observe that it
suffices to do so for all $k$ `large enough' and change the universal
constant $c$ if necessary. Fix an arbitrary $x
\in X$. 

For each $g \in E_{M,l,x}^k$ there exists a sequence of balls $U_1(g)
\subsetneq V_1(g) \subsetneq \dots \subsetneq U_k(g) \subsetneq
V_k(g)$ with center $g$ such that for every 
\[
S_{U_i}(x)<\alpha \text{ and } S_{V_i(g)}(x)>\beta \quad \text{ for
  all } i=1,\dots,k.
\]
It is clear that for every $g \in E_{M,l,x}^k$ and every index $i=1,\dots,k-1$ we have
\begin{equation}
\label{eq.szgrwth}
\cntm{V_{i+1}(g)} > \cntm{U_{i+1}(g)}>\cntm{V_{i}(g)} >  \cntm{U_{i}(g)}.
\end{equation}
Since $\Gamma$ is a group of polynomial growth endowed with a word
metric and the balls $U_{i+1}(g)$,$U_{i}(g)$,$V_{i+1}(g)$ and $V_i(g)$
above are centered at $g$,
condition \eqref{eq.szgrwth} implies that the radius of the ball
$U_{i+1}(g)$ is greater or equal than the radius of the ball $U_i(g)$
plus $2$. Similarly, the radius of $V_{i+1}(g)$ is at least the radius
of $V_i(g)$ plus $2$.

Let $s_1:=\max(s_1(\delta/4), s_!)$ be the maximum of the constants given by Theorem
\ref{t.expgr} and Lemma \ref{l.measvsrad}. We skip the first $k':=\max(\lceil s_1
\rceil, \lceil k/2 \rceil)$ upcrossings to ensure that the radius of
the balls $U_i(g), V_i(g)$ is not smaller than $s_1$ and $k$. Observe that
$\frac{\cntm{V_i(g)}}{\cntm{U_i(g)}} > 1+\delta$ for all indexes $i$
and all $g \in E_{M,l,x}^k$. Indeed, suppose on the contrary that $\frac{\cntm{V_i(g)}}{\cntm{U_i(g)}} \leq 1+\delta$ for
some index $i$ and some $g \in E_{M,l,x}^k$. Then $U_i(g)$
$\delta$-fills $V_i(g)$ and the radius of $V_i(g)$ is at least $k$,
thus leading to a contradiction, since $x \notin R_g^k$ by definition of $E_{M,l,x}^k$. Lemma \ref{l.measvsrad} now implies that
if $V_{i}(g) = \bl(g,r_1)$ and $U_i(g)=\bl(g,r_2)$, then
\[
\frac{r_1}{r_2} > \left( \frac{1+\delta}{1+2\delta/3} \right)^{1/q}.
\]

Let 
\[
D:=q \left\lceil
  \frac{\log(2+576q/\delta)}{\log(1+\delta)-\log(1+2\delta/3)}
\right\rceil + 1 \in \N.
\]
We define towers $\widetilde \calU, \widetilde \calV$ over the set
$E_{M,l,x}^k$ of height $L=\lfloor (k-k')/D \rfloor$ by setting
\[
\widetilde U_i(g):=U_{D (i-1)+1}(g), \quad \widetilde V_i(g):=V_{D (i-1)+1}(g)
\]
for all indexes $i = 1,\dots,L$ and all $g \in
E_{M,l,x}^k$. 
Recall that
\[
C=\frac{3^{q+1}}{3^{q+1}-1}, \quad K = \lceil 2 \log_C\frac{16}{\delta}\rceil.
\]
Theorem \ref{t.expgr} applied to the towers $\widetilde \calU,
\widetilde \calV$ now implies that
\[
\cntm{E_{M,l,x}^k} \leq \left( 1+\frac{\delta}{4 \cdot
    3^{q+1}}\right)^{-\lfloor L / K \rfloor} \cntm{\bl(M+l)},
\]
since $\bigcup\limits_{V \in \widetilde \calV} V \subseteq
\bl(M+l)$. It follows that there exist universal constants $c>0, \rho \in (0,1)$
such that
\[
\cntm{E_{M,l,x}^k} \leq c \rho^k \cntm{\bl(M)} + o_{x,l}(\cntm{\bl(M)})
\]
for all $k$ large enough, and the proof is complete.

\subsection{Kingman's Subadditive Ergodic Theorem}
\label{ss.kingman}
As one of the applications of Theorem \ref{t.hutp} we show how the
proof of exponential decay in Kingman's subadditive ergodic theorem
from \cite{hochman2009} carries over to groups of polynomial growth.

A stochastic process $(S_{\bl(g,r)})$ on a group of polynomial growth $\Gamma$ is called \emph{subadditive} if
for every ball $V \subset \Gamma$ and for every finite collection of disjoint
balls $V_1,V_2,\dots,V_n$ such that $V = \bigcup\limits_{i=1}^n V_i$
we have
\[
S_{V} \leq \sum\limits_{i=1}^n S_{V_i}.
\]

\begin{thm}
Let $(S_{\bl(g,r)})$ be a stationary subadditive process on a group of
polynomial growth $\Gamma$ such that for
some constant $C>0$ we have
$S_{\bl(\ue,0)} \leq C$ almost surely. Then for every interval
$(\alpha,\beta)$ there exist constants $c>0,\rho \in (0,1)$, depending only on
$\alpha,\beta, \Gamma$ and $C$, such that
\[
\bbP\left(\left\{ x: \left( \frac{1}{\cntm{\bl(i)}} S_{\bl(i)} \right)_{i
    \geq 0} \in \flc{\alpha}{\beta}{k} \right\} \right)
\leq c \rho^k
\]
for all $k \geq 1$.
\end{thm}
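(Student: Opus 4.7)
The plan is to apply Theorem \ref{t.hutp} to the normalized process $T_{\bl(g,r)} := S_{\bl(g,r)}/\cntm{\bl(r)}$, which is stationary because $S$ is and because $\cntm{\bl(g,r)} = \cntm{\bl(r)}$ for every $g \in \Gamma$. For every $\delta > 0$ small enough in the sense of Hochman's theorem, we obtain constants $c > 0$ and $\rho \in (0,1)$ with
\[
\bbP(Q_\ue^k) \leq c\rho^k + \bbP(R_\ue^k), \qquad k \geq 1,
\]
where $Q_\ue^k$ is precisely the upcrossing event in the statement (rewritten in terms of $T$) and $R_\ue^k$ is the event that some $n > k$ satisfies $S_{\bl(n)}(x) > \beta\cntm{\bl(n)}$ while $\bl(n)$ is $\delta$-fillable by disjoint balls $V_1,\dots,V_m$ satisfying $S_{V_i}(x) < \alpha\cntm{V_i}$. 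It therefore suffices to show that $\bbP(R_\ue^k) = 0$ provided $\delta$ is taken small enough in terms of $\alpha, \beta, C$.

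To eliminate $R_\ue^k$, I would fix $x \in R_\ue^k$ together with witnessing data $n, V_1,\dots,V_m$ and refine the $\delta$-filling into an exact partition of $\bl(n)$ by appending singletons $\bl(g_1,0),\dots,\bl(g_r,0)$ covering the remainder $\bl(n) \setminus \bigsqcup_i V_i$, whose cardinality is strictly less than $\delta \cntm{\bl(n)}$. Stationarity together with the almost sure bound $S_{\bl(\ue,0)} \leq C$ implies, by countability of $\Gamma$, the simultaneous almost sure bound $S_{\bl(g,0)} \leq C$ for all $g \in \Gamma$. Subadditivity then yields
\[
S_{\bl(n)}(x) \leq \sum_{i=1}^m S_{V_i}(x) + \sum_{j=1}^r S_{\bl(g_j,0)}(x) \leq \bigl(\alpha + \delta(C + |\alpha|)\bigr)\cntm{\bl(n)},
\]
the final inequality being a short case check on the sign of $\alpha$ using $\sum_i \cntm{V_i} \leq \cntm{\bl(n)}$ and $r < \delta\cntm{\bl(n)}$. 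Combining with $S_{\bl(n)}(x) > \beta\cntm{\bl(n)}$ forces $\beta < \alpha + \delta(C + |\alpha|)$, which is impossible once $\delta < (\beta - \alpha)/(C + |\alpha|)$.

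The main obstacle is genuinely minor: one must choose $\delta$ small enough to satisfy both the smallness hypothesis of Theorem \ref{t.hutp} and the algebraic inequality $\delta < (\beta - \alpha)/(C + |\alpha|)$ that makes $R_\ue^k$ empty. Since both are open conditions, any sufficiently small $\delta$ works, and the resulting constants $c, \rho$ depend only on $\alpha, \beta, \Gamma$ and $C$, as claimed. A secondary subtlety is the upgrade from the pointwise a.s.\ bound on $S_{\bl(\ue,0)}$ to a simultaneous a.s.\ bound across $\Gamma$, but this is immediate from countability.
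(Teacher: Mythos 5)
Your proof is correct and follows essentially the same route as the paper: normalize the process, apply Theorem~\ref{t.hutp}, and show that the remainder event $R_\ue^k$ is empty by refining a $\delta$-filling into an exact partition with singletons and invoking subadditivity together with $S_{\bl(g,0)}\leq C$ a.s. The one place you are more careful than the paper is the sign of $\alpha$: the paper's bound $\alpha+\delta C$ obtained from $\alpha\cdot\tfrac{\sum|V_i|}{|\bl(n)|}+C\cdot\tfrac{|\bl(n)\setminus\bigcup V_i|}{|\bl(n)|}$ only holds when $\alpha\geq 0$, whereas your choice $\delta<(\beta-\alpha)/(C+|\alpha|)$ makes the contradiction go through for every real $\alpha$. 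The paper does restrict to $C>\alpha$ up front (otherwise there are no upcrossings at all since the normalized process is a.s.\ bounded by $C$), which you omit, but your $\delta$ estimate handles that case automatically anyway.
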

\begin{proof}
First of all, by stationarity and subadditivity of the process
$(S_{\bl(g,r)})$ we have
\[
\frac 1 {\cntm{\bl(g,r)}} S_{\bl(g,r)}(x) \leq C \quad \text{ for all } g
\in \Gamma, r \geq 0
\]
for all $x$ in a some subset $X_0 \subseteq X$ of full measure. This implies that it suffices to consider only the
case $C > \alpha$. Furthermore, by replacing if necessary the interval
$(\alpha,\beta)$ with a smaller subinterval, it suffices to prove the
theorem for all $(\alpha,\beta)$ such that
\[
\delta:=\frac{\beta - \alpha}{C} < \delta_0,
\]
where $\delta_0$ is the `small enough $\delta$' given by Theorem
\ref{t.hutp}. We apply Theorem \ref{t.hutp} with this value of
$\delta$ to the stationary process
\[
\left( \frac{1}{\cntm{\bl(g,r)}} S_{\bl(g,r)} \right),
\]
and it only remains to show that the event
\[
R:=\left\{ x \in X_0: 
\begin{array}{c}
\exists n>k \text{ s.t. }
  \frac{1}{\cntm{\bl(n)}} S_{\bl(n)}(x)>\beta \text{ and } \bl(n) \text{ can be  } \delta-\text{filled}\\
 \text{by disjoint balls } V_1,\dots,V_m \text{ s.t. } \forall i \
  \frac{1}{\cntm{V_i}} S_{V_i}(x)<\alpha
\end{array} \right\}
\] 
is empty. Suppose the contrary and pick $x \in R$. If a ball $\bl(n)$
such that $\frac{1}{\cntm{\bl(n)}} S_{\bl(n)}(x)>\beta$ is $\delta$-filled by the
balls $V_1,\dots,V_m$ such that $S_{V_i}(x)<\alpha \cntm{V_i}$ for all $i$, then
via subadditivity of the process we obtain the inequality
\begin{align*}
\frac{1}{\cntm{\bl(n)}} S_{\bl(n)}(x) &\leq \sum\limits_{i=1}^m
\frac{1}{\cntm{\bl(n)}} S_{V_i}(x)+\frac{1}{\cntm{\bl(n)}}\sum\limits_{g \in \bl(n) \setminus \bigcup
  V_i} S_{\bl(g,0)}(x) \leq \\
&< \sum\limits_{i=1}^m
\frac{\alpha \cntm{V_i}}{\cntm{\bl(n)}} + \frac{\cntm{\bl(n) \setminus \bigcup
  V_i}}{\cntm{\bl(n)}} C \leq \alpha + \delta C.
\end{align*}
A direct computation shows that $ \alpha + \delta C
= \beta$, contradiction.
\end{proof}

\bigskip

{\bf Acknowledgements.}\
This research was carried out during the author's PhD studies in TU Delft
under the supervision of Markus Haase. I would like to thank him for
his support. The author also kindly acknowledges the financial support from Delft Institute of Applied Mathematics.

\printbibliography[]
\end{document}